\definecolor{bulgarianrose}{rgb}{0.28, 0.02, 0.03}
\newtheorem{theorem}{Theorem}[section]
\newtheorem{corollary}[theorem]{Corollary}
\newtheorem{lemma}[theorem]{Lemma}
\theoremstyle{definition}
\newtheorem*{remark}{Remark}
\newtheorem*{conjecture}{Conjecture}
\newtheorem*{acknowledgement}{Acknowledgement}
\newcommand{\ex}{\ensuremath{{\textnormal{ex}}}}
\def\imod#1{\allowbreak\mkern10mu({\operator@font mod}\,\,#1)}
\def\@textbottom{\vskip\z@\@plus 18pt}
\let\@texttop\relax
\title[Ramsey numbers involving odd cycles]{A study of two Ramsey numbers\\ involving odd cycles}
\author{Sayan Gupta}
\address{\newline 
\newline (a) School of Mathematical Sciences\\ \newline National Institute of Science Education and Research (NISER) Bhubaneswar\\\newline Jatni, Khurda- $752050$, Odisha, India.
\newline (b) Homi Bhabha National Institute (HBNI)\\ \newline Training School Complex, Anushakti Nagar, Mumbai- $400094$, India.
\newline \textnormal{\textestimated-Mails}: {\tt sayan.gupta\MVAt niser.ac.in, sayangupta4u\MVAt gmail.com}}
\subjclass[2020]{Primary: 05D10, 05C55. Secondary: 05C35.}
\keywords{Ramsey Numbers, Book graph, Pancyclic and Weakly pancyclic graph, Bipanconnected graph}
\begin{document}

\thispagestyle{empty}
\vspace{10cm}

\begin{abstract}
For any two graphs $G$ and $H$, the \emph{Ramsey number} $R(G,H)$ is the minimum integer $n$ such that any graph on $n$ vertices either contains a copy of $G$ or its complement contains a copy of $H$ as a subgraph. The \emph{book graph} of order $(n+2)$, denoted by $B_{n}$, is the graph with $n$ distinct copies of triangles sharing a common edge called the `base'. A cycle of order $m$ is denoted by $C_{m}$. A lot of studies have been done in recent years on the Ramsey number $R(B_{n}, C_{m})$. However, the exact value remains unknown for several $n$ and $m$. In 2021, Lin and Peng obtained the value of $R(B_{n}, C_{m})$ under certain conditions on $n$ and $m$. In this paper, they remarked that the value is still unknown for the range $n\in [\frac{9m}{8}-125, 4m-14]$. In a recent paper, Hu et al. determined the value of the book-cycle Ramsey number within the range $n\in [ \frac{3m-5}{2}-125, 4m]$ where $m$ is odd and $n$ is sufficiently large. In this article, we extend the investigation to smaller values of $n$. We have obtained a bound of $R(B_{n}, C_{m})$ if $n\in [2m-3, 4m-14]$ and $m\geq 7$ is odd. This is a progress on the earlier result. A connected graph $G$ is said to be \emph{$H$-good} if the formula,
\begin{equation*}
R(G,H)= (|G|-1)(\chi(H)-1)+\sigma(H)       
\end{equation*}
holds, where $\chi(H)$ is the chromatic number of $H$ and $\sigma(H)$ is the size of the smallest colour class for the $\chi(H)$-colouring. In this article, we have studied the \emph{Ramsey goodness} of the graph pair $(C_{m}, \mathbb{K}_{2,n})$, where $\mathbb{K}_{2,n}$ is the complete biparite graph. We have obtained an exact value of $R(\mathbb{K}_{2,n},C_{m})$ for all $n$ satisfying $n\geq 3493$ and $n\geq 2m+499$ where $m\geq 7$ is odd. This shows that $\mathbb{K}_{2,n}$ is $C_{m}$-good, which extends a previous result on the Ramsey goodness of $(C_{m}, \mathbb{K}_{2,n})$. Also, this improves the lower bound on $n$ from a previous result on the Ramsey number $R(B_{n}, C_{m})$
\end{abstract}
\maketitle

\section{Introduction}

Let $G$ and $H$ be two graphs. The \emph{Ramsey number} $R(G,H)$ is the minimum positive integer $n$ such that each red-blue colouring of the edges of the complete graph $\mathbb{K}_{n}$ contains a red copy of $G$ or a blue copy of $H$. Equivalently, it is the minimum integer $n$ such that for any graph $G'$ with $n$ many vertices, either $G'$ contains a copy of $G$ as a subgraph or its complementary graph $\Bar{G'}$ contains a copy of $H$ as a subgraph. Note that, for any two graphs $G$ and $H$, $R(G,H)=R(H,G)$. If $G=\mathbb{K}_{k}$ and $H=\mathbb{K}_{l}$, we simply denote the Ramsey number as $R(k,l)$. Ramsey's theorem states that for any two positive integers $k$ and $l$, $R(k,l)$ exists. The \emph{book graph} of order $(n+k)$ denoted by $B_{n}^{(k)}$ is the graph with $n$ copies of $\mathbb{K}_{k+1}$ sharing a common $\mathbb{K}_{k}$ called \emph{base}. In particular, it is the graph $\mathbb{K}_{k}\circledast\mathbb{K}_{n}^{c}$, where the notation $\circledast$ means that each vertex of $\mathbb{K}_{k}$ is adjacent to all the $n$ isolated vertices. The Ramsey number of book versus itself was first studied by Erd\"{o}s, Faudree, Rousseau, and Schelp \cite{MR479691}. There, they showed that
\begin{equation*}
2^{k}n+o_{k}(n)\leq R(B_{n}^{(k)},B_{n}^{(k)})\leq 4^{k}n. 
\end{equation*}
In addition, Thomason \cite{MR679211} conjectured that the value of $R(B_{n}^{(k)},B_{n}^{(k)})$ is asymptotically equal to the lower bound. The conjecture was proved by Conlon \cite{MR4115773}. In this article, we restrict our discussion to $k=2$ and simply denote the graph by $B_{n}$. 

We discuss two Ramsey numbers, namely $R(B_{n},C_{m})$ and $R(\mathbb{K}_{2,n},C_{m})$. Here $\mathbb{K}_{2,n}$ denotes the complete bipartite graph of order $(n+2)$ with vertex partition $A\sqcup B$ where $|A|=2$ and $|B|=n$. Note that removing the base of $B_{n}$ results in $\mathbb{K}_{2,n}$. In Extremal Combinatorics, the Ramsey number for the book graph versus other types of graphs holds significant importance. One such example is the book-cycle Ramsey number. Significant results have been obtained regarding the book-cycle Ramsey number in recent years. Rousseau and Sheehan \cite{MR486186} proved that $R(B_{n},C_{3})= 2n+3$ for each $n>1$. Faudree et al. \cite{MR1110243} obtained a general result on $R(B_{n},C_{m})$.

\begin{theorem}\cite{MR1110243}
For each integer $n\geq 4m-13$ and odd integer $m\geq 5$, $R(B_{n},C
_{m})=2n+3$, 
\end{theorem}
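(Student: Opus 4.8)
The statement packages two inequalities, $R(B_n,C_m)\ge 2n+3$ and $R(B_n,C_m)\le 2n+3$, which I would treat separately. For the lower bound I would exhibit a red--blue colouring of $\mathbb{K}_{2n+2}$ avoiding both a red $B_n$ and a blue $C_m$. Split the $2n+2$ vertices into two classes of size $n+1$, colour every edge inside a class red and every edge between the classes blue. The blue graph is then $\mathbb{K}_{n+1,n+1}$, which is bipartite and hence has no odd cycle; since $m$ is odd this rules out a blue $C_m$. The red graph is a disjoint union of two copies of $\mathbb{K}_{n+1}$; as $B_n$ is connected on $n+2$ vertices while each red component spans only $n+1$ vertices, no red $B_n$ can occur. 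Therefore $R(B_n,C_m)>2n+2$.

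For the upper bound I would fix an arbitrary colouring of $\mathbb{K}_N$ with $N=2n+3$, assume there is no blue $C_m$, and produce a red $B_n$; concretely I must find a red edge $uv$ whose endpoints have at least $n$ common red neighbours. Two cases dispose of the extremes. If the blue graph is bipartite, the larger side has at least $\lceil N/2\rceil=n+2$ vertices and induces a red clique $\mathbb{K}_{n+2}$, which already contains $B_n$ (any red edge as base, the remaining $n$ vertices as pages). If instead the maximum blue degree is at most $(n+1)/2$, then every red degree is at least $2n+2-(n+1)/2\ge \tfrac32(n+1)$, and for any red edge $uv$ inclusion--exclusion over $N_R(u),N_R(v)\subseteq V\setminus\{u,v\}$ yields at least $2n+1-2\cdot\tfrac{n+1}{2}=n$ common red neighbours, so every red edge is already the base of a red $B_n$.

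The substantive case is a non-bipartite blue graph possessing a vertex $v$ of large blue degree. Writing $S=N_B(v)$, I would use that any blue path on $m-1$ vertices with both endpoints in $S$ and interior avoiding $v$ closes up through $v$ to a blue $C_m$. The plan is therefore to manufacture a blue path of the \emph{exact} length $m-2$ between two vertices of $S$: if the blue graph on $S$ is dense it will be weakly pancyclic and its panconnected (respectively, on the bipartite side, bipanconnected) core supplies paths of every admissible length, in particular $m-2$, producing the forbidden blue $C_m$; if on the other hand $S$ is blue-sparse it is red-dense, and a red $B_n$ surfaces inside $S$ by the inclusion--exclusion count above. Either alternative contradicts the standing hypotheses, which closes the upper bound.

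The main obstacle is exactly this last case: one must force a cycle of the precise odd length $m$ rather than merely some long odd cycle, and one must keep the ``sparse blue'' and ``dense blue'' thresholds mutually consistent so that the two regimes together cover all high-degree vertices with no gap between them. This is where the hypothesis $n\ge 4m-13$ enters, since it makes $N=2n+3$ large enough relative to $m$ both for the pancyclic/panconnected input to apply with girth at most $m$ and circumference at least $m$, and for the quantitative sparse-versus-dense dichotomy to overlap rather than leave an uncovered range of blue degrees.
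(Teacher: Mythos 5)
First, note that this statement is quoted from Faudree, Rousseau and Sheehan and is not proved in the paper; the closest internal comparison is the proof of Theorem~\ref{main theorem 1}, which uses the same lower-bound construction and a related (but quantitatively weaker) upper-bound strategy. Your lower bound is correct and is exactly the paper's construction: two disjoint copies of $\mathbb{K}_{n+1}$ in red, so the blue graph is $\mathbb{K}_{n+1,n+1}$, which has no odd cycle, while each red component has only $n+1$ vertices and cannot contain the connected $(n+2)$-vertex graph $B_n$. Your two extreme cases for the upper bound are also sound: a bipartite blue graph forces a red $\mathbb{K}_{n+2}\supseteq B_n$, and if every blue degree is at most $(n+1)/2$ then any red edge has at least $(2n+1)-2\cdot\frac{n+1}{2}=n$ common red neighbours (one should add a word on why a red edge exists, but that is trivial since a complete blue $\mathbb{K}_{2n+3}$ contains $C_m$).

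The genuine gap is in the ``substantive case,'' and it is not merely a matter of unchecked constants: the two regimes of your dichotomy do not meet. For the sparse alternative (``$S$ is red-dense, so a red $B_n$ surfaces inside $S$'') the inclusion--exclusion count needs $|S|\geq n+2$ plus slack for blue degrees inside $S$, whereas your dense alternative is triggered only by a vertex of blue degree exceeding $(n+1)/2$; a vertex $v$ with $|N_B(v)|$ around, say, $n$ satisfies neither hypothesis. Moreover ``blue-sparse versus blue-dense on $S$'' is a global statement, while both of your tools are degree conditions that must hold at \emph{every} vertex of $S$: the bipanconnectedness lemma (Lemma~\ref{Hui}) needs $\delta\geq |A|/2+1$ and the weakly-pancyclic lemmas need $\delta\geq(|S|+2)/3$ or $\delta\geq|S|/4+250$, none of which follows from $v$ having large blue degree. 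In the paper's own argument this is repaired by taking $v$ of large \emph{red} degree and using $B_n$-freeness (Lemma~\ref{intersection lemma}) to force $\Delta(H)\leq n-1$ on $H=G[N_G(v)]$, which is what actually delivers the minimum-degree hypothesis for $\Bar{H}$; your choice $S=N_B(v)$ gives no such leverage. Finally, even with the paper's setup, the modern pancyclicity machinery costs an additive constant (the $+250$ in Lemma~\ref{Brandt}), which is precisely why Theorem~\ref{main theorem 1} only reaches $2n+\frac{1010}{3}$ rather than $2n+3$; obtaining the exact value $2n+3$ as in the quoted theorem requires a more hands-on path/cycle argument in the style of Faudree--Rousseau--Sheehan, not the plug-in of weak pancyclicity and panconnectedness you propose.
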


\noindent In this article, the author also proved that if $n\leq \frac{m}{2}-2$, then $R(B_{n},C_{m})= 2m-1$. Later, the bound of $n$ was improved by Shi \cite{MR2587033}, who proved the result for $n\leq \frac{3m}{2}-\frac{7}{2}$. Lin and Peng \cite{MR4234150} determined the exact values of $R(B_{n},C_{m})$ if $n\in [\frac{2m}{3}-1, \frac{9m}{8}-126]\cap \mathbb{Z}$.
\begin{theorem}\cite{MR4234150}
For each integer $n\geq 1000$, 
\begin{align*}
R(B_{n},C_{m})&=\left\{\begin{array}{lll}
		3m-2& \textnormal{ if  }&  m\in[\frac{8n}{9}+112, n]\cap\mathbb{Z}\\
		3n-1&\textnormal{  if }& m= n+1\\
        3n &\textnormal{  if }& m\in[n+2,\frac{3n+1}{2}]\cap\mathbb{Z}\\
        2m-1&\textnormal{  if }&  m= \left\lceil \frac{3n}{2} \right\rceil +1.
	\end{array}\right.
\end{align*}
\end{theorem}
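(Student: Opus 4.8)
The plan is to prove each of the four cases by exhibiting a lower bound through an explicit red/blue colouring and a matching upper bound showing that every colouring of $\mathbb{K}_N$ on the claimed number $N$ of vertices contains a red $B_n$ or a blue $C_m$. Throughout I write $G$ for the blue graph and use the equivalent formulation: the red graph $R$ contains $B_n$ exactly when some edge of $R$ lies in at least $n$ triangles, i.e. has at least $n$ common red neighbours. The four regimes correspond to three extremal shapes: three blue cliques $\mathbb{K}_{m-1}$ (the case $m\le n$), two blue cliques $\mathbb{K}_{m-1}$ (the case $m=\lceil 3n/2\rceil+1$), and an intermediate colouring for $n+1\le m\le (3n+1)/2$.

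For the lower bounds I would first handle the two clean cases. When $m\le n$, colour three disjoint copies of $\mathbb{K}_{m-1}$ blue and all edges between them red: the blue components have order $m-1<m$, so there is no blue $C_m$, while each red edge joins two parts and has exactly $m-1\le n-1$ common red neighbours (the third part), so there is no red $B_n$; this gives $R(B_n,C_m)\ge 3m-2$. When $m=\lceil 3n/2\rceil+1$, colour two disjoint copies of $\mathbb{K}_{m-1}$ blue and all edges between them red: the red graph is the triangle-free $\mathbb{K}_{m-1,m-1}$, so it contains no book at all, and again each blue component is too small for $C_m$, giving $R(B_n,C_m)\ge 2m-1$. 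The boundary cases $m=n+1$ and $n+2\le m\le(3n+1)/2$ need an order-$(3n-2)$ respectively order-$(3n-1)$ colouring; here a pure clique/multipartite colouring falls one or two vertices short of the target, because the book constraint forces the blue parts to have size $\le n-1$. I would therefore build the blue graph from cliques of size at most $m-1$ together with a bipartite piece, using crucially that $m$ is \emph{odd} so that the bipartite piece creates no odd cycle and in particular no $C_m$, and attach the remaining vertices with small blue degree so that no red edge acquires $n$ common red neighbours.

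The upper bounds are the mathematical heart, and rest on one mechanism. Since $R$ has no $B_n$, for every vertex $u$ the red graph induced on the red-neighbourhood $N_R(u)$ has maximum degree at most $n-1$ (a red edge $uw$ together with $w$'s red neighbours inside $N_R(u)$ would witness a large book). Taking complements, the blue graph induced on $N_R(u)$ has minimum degree at least $d_R(u)-n$. Hence a vertex of large red degree produces a dense blue subgraph on its red-neighbourhood, and I would apply Bondy's pancyclicity theorem, or the weak-pancyclicity theorem for non-bipartite graphs whose minimum degree exceeds one third of the order, to extract a blue cycle of length $m$ from it, a contradiction. Consequently every red degree is bounded, which forces the blue minimum degree on $\mathbb{K}_N$ to be large; applying weak pancyclicity to $G$ itself then yields cycles of all lengths between its girth and its circumference, and one checks that the circumference is at least $m$ while the girth is at most $m$, producing the blue $C_m$.

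The one regime this scheme does not immediately settle is when $G$ is bipartite, or very close to it: a bipartite blue graph has no odd cycle, hence no $C_m$, so there the argument must instead locate the red book directly. This is exactly the configuration realised by the extremal colourings, so I would analyse it by hand: when $G$ is essentially complete bipartite, its complement splits into two large red cliques, and bounding the book of an edge inside such a clique is what pins down the admissible sizes and hence $N$. The main obstacle is quantitative rather than conceptual: matching the thresholds exactly (the boundaries $\frac{8n}{9}+112$, $n+1$, $\frac{3n+1}{2}$, and $\lceil\frac{3n}{2}\rceil+1$) requires tracking the constants through the pancyclicity estimates and carefully treating the few vertices of small blue degree, together with the near-bipartite borderline where weak pancyclicity just fails. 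I expect the delicate construction achieving order $3n-1$ in the case $n+2\le m\le(3n+1)/2$, and the tight book count in the near-bipartite upper-bound case, to be the hardest steps.
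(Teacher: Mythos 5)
This statement is not proved in the paper at all: it is Lin and Peng's theorem, imported verbatim from \cite{MR4234150} as background, so there is no ``paper's own proof'' to compare against. Judged on its own terms, your proposal correctly identifies the standard machinery for book--cycle Ramsey upper bounds --- the observation that $B_n$-freeness of the red graph bounds common red neighbourhoods, hence forces large blue minimum degree inside red neighbourhoods, followed by Bondy's pancyclicity theorem and the Brandt--Faudree--Goddard weak-pancyclicity theorem, with a separate hand analysis of the (near-)bipartite blue graph. This is indeed the framework of \cite{MR4234150}, and it is the same framework the present paper uses for its Theorems~\ref{main theorem 1} and~\ref{main theorem 2}. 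Your two explicit lower-bound constructions (three blue $\mathbb{K}_{m-1}$'s for $m\le n$, two blue $\mathbb{K}_{m-1}$'s for $m=\lceil 3n/2\rceil+1$) are correct as stated.

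However, the proposal has genuine gaps and is a plan rather than a proof. First, the lower bounds for the two middle cases are missing: you correctly compute that balanced blue cliques cap out at $3n-3$ vertices (parts of size $\min(m-1,n-1)=n-1$), which is short of the required $3n-2$ and $3n-1$, but the replacement construction is only gestured at. The suggestion to exploit oddness of $m$ via a blue bipartite piece is not obviously available, since the theorem as quoted imposes no parity condition on $m$ in the ranges $m=n+1$ and $m\in[n+2,\frac{3n+1}{2}]$; an actual extremal colouring must be exhibited and verified against both the blue-$C_m$ and the red-$B_n$ constraints, and this is exactly where the values $3n-1$ and $3n$ (as opposed to $3n-2$) are decided. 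Second, the upper bound is only a mechanism: the step ``every red degree is bounded, which forces the blue minimum degree to be large'' needs the quantitative threshold at which a large red neighbourhood already yields a blue $C_m$ (this is where $\frac{8n}{9}+112$ and the hypothesis $n\ge 1000$ enter, via the $+250$ in Lemma~\ref{Brandt} and the $\delta\ge\frac{|V|+2}{3}$ condition in Lemma~\ref{Brandt 2}), and the near-bipartite case --- which you correctly flag as the configuration realised by the extremal colourings --- is precisely the hard case and is left entirely open. As written, the proposal would not let a reader reconstruct the proof of any of the four exact values.
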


\noindent In the same article, the authors put down a remark that the value of $R(B_{n},C_{m})$ is still unknown in the other ranges. In a recent paper, Hu et. al.\cite{MR4875667} obtained the exact value of $R(B_{n}, C_{m})$ in such ranges for sufficiently large $n$. The result is as follows.

\begin{theorem}\label{Peng}\cite{MR4875667}
For $m$ odd and $n$ sufficiently large, we have
\begin{align*}
R(B_{n},C_{m})&=\left\{\begin{array}{lll}
		3m-2& \textnormal{ if  }&  n\in[\frac{10m}{9}, \frac{3m-5}{2}]\cap\mathbb{Z}\\
        2n+3 &\textnormal{  if }& n\in[\frac{3m-5}{2},4m]\cap\mathbb{Z}.
    \end{array}\right.  
\end{align*}

\end{theorem}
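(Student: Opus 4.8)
The plan is to establish the two values as matching lower and upper bounds, regime by regime. First note that the two formulae agree at the crossover $n=\frac{3m-5}{2}$, where $2n+3=3m-2$, so the ranges overlap consistently and it suffices to treat each separately. For the lower bounds I would exhibit critical colourings of $\mathbb{K}_{N-1}$. To get $R\ge 3m-2$, partition $\mathbb{K}_{3m-3}$ into three classes $V_1,V_2,V_3$ of size $m-1$, colouring edges inside a class blue and edges across classes red: the blue graph is $3\,\mathbb{K}_{m-1}$, every component of which has order $m-1<m$ and so contains no blue $C_m$, while the red graph is $\mathbb{K}_{m-1,m-1,m-1}$, whose largest book has exactly $m-1\le n-1$ pages (using $n\ge \frac{10m}{9}>m$), so there is no red $B_n$. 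To get $R\ge 2n+3$, split $\mathbb{K}_{2n+2}$ into two classes of size $n+1$, colour inside red and across blue; now the blue graph is the bipartite $\mathbb{K}_{n+1,n+1}$ with no odd cycle, and the red graph $2\,\mathbb{K}_{n+1}$ has book number $n-1$.

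For the upper bounds, fix a red/blue colouring of $\mathbb{K}_N$ with $N=3m-2$ in the first regime and $N=2n+3$ in the second, assume the blue graph $H$ has no $C_m$, and produce a red $B_n$. I would first record that in both regimes $N\ge 2n+3$ (with equality at the boundary), a fact I use repeatedly. The engine is the theory of weakly pancyclic and bipanconnected graphs: a balanced, sufficiently dense, $2$-connected bipartite blue subgraph is bipanconnected, hence joins any pair of vertices by blue paths of every admissible length. If on top of such a subgraph there were even a single blue edge inside one of its two sides, routing a path of the appropriate (even) length between its endpoints would close up a blue odd cycle, and tuning the length would yield exactly $C_m$---a contradiction. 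This forces a dichotomy on $H$: either $H$ is essentially bipartite (all but a few blue edges cross between two sides $X\sqcup Y$), or $H$ has no long cycle, i.e.\ its circumference is at most $m-1$.

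In the essentially-bipartite case, the red graph contains almost-complete graphs on $X$ and on $Y$. Both sides cannot have order at most $n+1$, since that would force $N\le 2n+2<2n+3\le N$; hence the larger side has order at least $n+2$. Deleting the few vertices incident to an interior blue edge still leaves an almost-complete red graph on at least $n+2$ vertices, and any red edge of it lies on at least $n$ common red neighbours, giving a red $B_n$.

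In the bounded-circumference case the Erd\"{o}s--Gallai theorem bounds $e(H)\le \frac{(m-1)(N-1)}{2}$, so the red graph $G$ is dense and a short computation shows its average degree exceeds $n$. The remaining, and hardest, step is to upgrade ``many red edges'' to ``a red edge whose endpoints have at least $n$ common red neighbours''. The naive bound $|N(u)\cap N(v)|\ge d(u)+d(v)-N$ falls just short of $n$ near the top of each range, so I would instead bound the red triangle count from below by a convexity (Moon--Moser or Kruskal--Katona) estimate on the cherries $\sum_w \binom{d(w)}{2}$ and divide by $e(G)$ to extract an edge of codegree at least $n$. Balancing this count against the error terms inherited from the near-bipartite reduction is precisely where the peculiar constant $\frac{10m}{9}$ and the hypothesis that $n$ be large enter, and I expect this constant optimisation---together with the stability argument needed to pass from ``no forced $C_m$'' to ``genuinely bipartite or genuinely short-cycled''---to be the main obstacle.
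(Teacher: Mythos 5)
This statement is quoted from Hu, Lin, \L uczak, Ning and Peng \cite{MR4875667}; the present paper does not prove it, and the only information it records about the proof is that it rests on Theorem~\ref{Haxell et. al.} with $\epsilon=\frac{5}{27}$, which is exactly where the ``sufficiently large'' hypothesis (of order $10^{14}$) comes from. So there is no in-paper argument to compare yours against, and I can only assess your sketch on its merits. Your two lower-bound colourings are correct and standard (the second, blue $\mathbb{K}_{n+1,n+1}$ versus red $2\mathbb{K}_{n+1}$, is the same construction this paper uses for Theorems~\ref{main theorem 1} and~\ref{main theorem 2}), and your observation that the two formulae agree at $n=\frac{3m-5}{2}$ is a useful sanity check.

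The upper bound, however, has two genuine gaps. First, the dichotomy ``either the blue graph $H$ is essentially bipartite or $c(H)\leq m-1$'' is asserted, not derived. Every tool you invoke (Lemmas~\ref{Brandt 2}, \ref{Brandt}, \ref{Hui} in this paper's numbering) requires a subgraph whose minimum degree is a positive fraction of its order, and a $C_m$-free blue graph on $3m-2$ vertices need not hand you one: it can have many low-degree vertices, be disconnected, or retain circumference $\geq m$ but lose the degree condition once you prune. Bridging ``no blue $C_m$'' to your structural dichotomy is the heart of the cited proof and is precisely where Theorem~\ref{Haxell et. al.} is used; your sketch never invokes it and would, if valid, prove the theorem for all $m\geq 7$ rather than for astronomically large $n$ --- a warning sign. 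Second, the counting step in the bounded-circumference case is quantitatively insufficient, not merely delicate. With $N=3m-2$ and $e(H)\leq\frac{(m-1)(N-1)}{2}$, the red graph $G$ satisfies $e(G)\geq\frac{3(m-1)(2m-1)}{2}$ and has average degree about $2m$; the Moon--Moser bound gives $t(G)\geq\frac{e(G)}{3}\left(\frac{4e(G)}{N}-N\right)$ with $\frac{4e(G)}{N}-N\approx m$, so dividing $3t(G)$ by $e(G)$ yields an average red codegree of only about $m-\frac{4}{3}$, which lies below $n$ throughout the range $n\geq\frac{10m}{9}$ and falls short by a factor of $\frac{3}{2}$ at the top. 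No choice of constants repairs a constant-factor deficit in an averaging argument, and the near-extremal configuration (blue close to $3\mathbb{K}_{m-1}$) shows that what is actually needed is a stability argument: one must show that any blue graph close to the extremal one already contains a $C_m$, and then extract the book from the structure rather than from a count. You correctly identify this as the main obstacle, but it is the missing proof, not an optimisation.
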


\noindent In this result, the `largeness' of $n$ arises due to the following result by Haxell, Gould, and Scott\cite{MR1939069}, where they have constructed the constant $K(\epsilon)=\frac{750000}{c^{5}}$. Hence, according to the proof of Theorem~\ref{Peng}, $n$ must satisfy $n\geq \frac{45K}{\epsilon^{4}}=\frac{33750000}{\epsilon^{9}}$ where $\epsilon=\frac{5}{27}$.
\begin{theorem}\label{Haxell et. al.}\cite{MR1939069}
    For each real number $\epsilon\in(0,1)$, there exists a constant $K=K(\epsilon)$ such that if $G$ is graph on $n$ vertices with $n\geq \frac{45K}{\epsilon^{4}}$ and $\delta(G)\geq \epsilon n$, then $G$ contains a cycle of length $l$ for all even $l\in [K-\mathop{ec}(G)]$ and for all odd $l\in[K-\mathop{oc}(G)]$\footnote{Here, $\mathop{ec}(G)$ and $\mathop{oc}(G)$ denote the length of the longest even and odd cycle in $G$ respectively.}.
\end{theorem}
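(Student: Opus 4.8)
The plan is to prove the theorem by showing that the set of even cycle lengths occurring in $G$ forms an unbroken interval reaching from some threshold $K=K(\epsilon)$ up to $\mathop{ec}(G)$, and similarly that the odd lengths fill the interval up to $\mathop{oc}(G)$. Since $G$ already contains cycles of length $\mathop{ec}(G)$ and $\mathop{oc}(G)$ by definition, the entire content is a \emph{no-gap} statement. I would therefore recast it as a downward descent within each parity class: if $G$ contains a cycle of length $\ell$ with $\ell\geq K+2$, then $G$ contains a cycle of length $\ell-2$. Iterating this from the two extremal lengths $\mathop{ec}(G)$ and $\mathop{oc}(G)$ down to $K$ then gives the full cycle spectrum claimed.

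First I would extract a robust expansion property from the minimum-degree hypothesis. A graph with $\delta(G)\geq \epsilon n$ and $n$ large cannot be locally sparse, and a standard cleaning argument (repeatedly deleting sets of poor expansion, or a sorting/dependent-random-choice step) produces a large subgraph $H$ in which every $S$ with $|S|\leq |H|/2$ satisfies $|N_H(S)|\geq (1+\alpha)|S|$ for some $\alpha=\alpha(\epsilon)$. This is where the powers of $\epsilon$ in the bound $n\geq\tfrac{45K}{\epsilon^{4}}$ originate: each round of cleaning costs a factor of $\epsilon$, and one needs enough rounds to keep $H$ simultaneously large and expanding.

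The engine of the descent is a rerouting argument carried out inside $H$. Since I start from a longest even cycle and, separately, a longest odd cycle, I only need to decrease the length by exactly $2$ at each step while preserving parity. Given a cycle $C$ of length $\ell$, the goal is to replace a short portion of $C$ by a detour that is shorter by precisely two: a chord of $C$ joining two vertices at cyclic distance $3$ already yields a cycle of length $\ell-2$, and more generally a short path through vertices off $C$ that reconnects two points of $C$ while saving two in length does the same. Pósa's rotation–extension technique, run inside the expander $H$, is what guarantees such an adjusting detour exists: rotating the endpoints of a suitable subpath produces a linear-sized set of candidate reconnection points, and expansion forces this set to contain a pair at exactly the cyclic distance needed to trim the length by $2$ without overshooting.

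The main obstacle is precisely this exact-length control. Producing \emph{some} shorter cycle is routine, but guaranteeing a decrease of exactly $2$ — never more — requires short rerouting paths of prescribed length between prescribed nearby vertices, i.e. a panconnectedness-in-a-range phenomenon. Establishing this uniformly, for every $\ell$ down to the cutoff and independently of the particular cycle $C$, is where both the threshold $K$ and the strong lower bound $n\geq\tfrac{45K}{\epsilon^{4}}$ are forced: once $\ell$ drops near $K$ the expander $H$ no longer has enough spare vertices to host the auxiliary adjusting paths, so the descent necessarily halts at $K$. I expect this bookkeeping — verifying that adjusting detours of the exact required length always exist — to be the technically heaviest part of the argument.
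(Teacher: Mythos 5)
First, a point of order: Theorem~\ref{Haxell et. al.} is not proved in this paper at all --- it is imported verbatim from Gould, Haxell and Scott \cite{MR1939069} and used as a black box (only the explicit constant $K(\epsilon)=750000/\epsilon^{5}$ is extracted from it). So there is no in-paper argument to compare your proposal against, and I can only assess the sketch on its own terms.

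On its own terms the sketch has two concrete gaps. The first is structural: your descent is meant to start at a longest even cycle and a longest odd cycle of $G$ and walk down in steps of $2$, but the rerouting engine lives inside a cleaned expander subgraph $H\subseteq G$. The theorem's conclusion is phrased in terms of $\mathop{ec}(G)$ and $\mathop{oc}(G)$ --- lengths realized by cycles of $G$ that need not meet $H$ at all, let alone be contained in it --- so after cleaning you can only hope to reach down from $\mathop{ec}(H)$ and $\mathop{oc}(H)$, which may be far smaller; recovering the full range is not addressed. The second gap is the one you yourself flag as ``the technically heaviest part'' and then do not carry out: the exact-length decrement. A chord at cyclic distance $3$ does give a cycle of length $\ell-2$, but nothing guarantees such a chord (or a detour saving exactly $2$) exists for an arbitrary cycle $C$ in an expander; P\'osa rotation produces a large set of alternative endpoints for paths of a \emph{fixed} length, and expansion then yields edges from that set into a large target, but it does not force the closing edge to land at the one cyclic distance that trims the length by exactly $2$ rather than by some other amount. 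Saying that expansion ``forces this set to contain a pair at exactly the cyclic distance needed'' restates the goal rather than proving it. The usual way to obtain exact-length control in this setting --- and the mechanism your sketch would ultimately have to supply --- is to exhibit a fixed pair of vertices joined by paths of \emph{every} length in a long interval of each available parity (a panconnectedness-type statement, which is indeed the flavour of tool this paper uses elsewhere, cf.\ Lemma~\ref{Hui}); that is a genuinely different engine from parity-preserving descent, and it is also where the explicit constants $K(\epsilon)$ and the threshold $n\geq 45K/\epsilon^{4}$ would have to come from, which your sketch does not account for.
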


In this article, we investigate the Ramsey number $R(B_{n},C_{m})$ within the range $n\in[2m-3, 4m-14]$, focusing on significantly smaller values of $n$ also. In particular, we have obtained a bound of $R(B_{n}, C_{m})$ that holds for all integers $n$ in the interval $[2m-3, 4m-14]$ where $m\geq 7$ is odd. Consequently, this result applies for all $n\geq 11$ as $m\geq 7$. This is a progress from the previous results. The result is as follows:

\begin{theorem}\label{main theorem 1}
For each odd integer $m\geq 7$ and for each integer $n\geq (2m-3)$,
\begin{equation*}
2n+3\leq R(B_{n}, C_{m}) \leq 2n+\frac{1010}{3}.    
\end{equation*}
\end{theorem}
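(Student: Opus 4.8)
The lower bound is a direct construction. Colour $\mathbb{K}_{2n+2}$ so that the red graph is $2\mathbb{K}_{n+1}$, two disjoint red cliques of order $n+1$; the blue graph is then the complete bipartite graph $\mathbb{K}_{n+1,n+1}$. Since $B_n$ is connected and has $n+2$ vertices, any red copy would have to lie inside a single $\mathbb{K}_{n+1}$, but a clique of order $n+1$ contains only $B_{n-1}$; and since $m$ is odd while $\mathbb{K}_{n+1,n+1}$ is bipartite, there is no blue $C_m$. Hence $R(B_n,C_m)\ge 2n+3$.

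For the upper bound, fix $N\ge 2n+\tfrac{1010}{3}$ and a red-blue colouring of $\mathbb{K}_N$ with no red $B_n$; the goal is to produce a blue $C_m$. The first observation is that there is no red $\mathbb{K}_{n+2}$: any edge of such a clique would be the base of a red book with the remaining $n$ vertices as pages. Equivalently the red clique number is at most $n+1$, so the blue graph has independence number at most $n+1$. If the blue graph were bipartite, one side would have at least $\lceil N/2\rceil \ge n+2$ vertices and hence induce a red $\mathbb{K}_{n+2}$, a contradiction; so I may assume the blue graph is non-bipartite, and in particular contains an odd cycle.

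The heart of the argument is a neighbourhood/weak-pancyclicity step. Let $v$ be a vertex of maximum red degree and let $S$ be its red neighbourhood. If some $u\in S$ had $n$ common red neighbours with $v$ (necessarily lying inside $S$), then $uv$ would be the base of a red $B_n$; hence the red graph induced on $S$ has maximum degree at most $n-1$, so the induced blue graph $B[S]$ has minimum degree at least $|S|-n$. When the maximum red degree $|S|$ is large — concretely $|S|\ge \tfrac{4n}{3}+\tfrac{1000}{3}$, which forces $\delta(B[S])\ge |S|-n\ge \tfrac{|S|}{4}+250$ — I invoke a weakly pancyclic theorem of Brandt--Faudree--Goddard type (this is the source of the explicit constant $250$, and hence of $\tfrac{1000}{3}$ and ultimately $\tfrac{1010}{3}$): a non-bipartite graph whose minimum degree exceeds a quarter of its order plus this absolute constant is weakly pancyclic with girth $3$ or $4$. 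Its circumference is at least $\delta(B[S])+1\ge |S|-n+1$, and using $n\ge 2m-3$ together with $m\ge 7$ one checks this exceeds $m$; since the girth is at most $4\le m$, the graph $B[S]$ contains cycles of every length from its girth to its circumference, in particular a blue $C_m$. The complementary regime, where the maximum red degree is below $\tfrac{4n}{3}+\tfrac{1000}{3}$, is precisely where the choice $N\ge 2n+\tfrac{1010}{3}$ is calibrated: then every vertex has blue degree at least $N-1-\bigl(\tfrac{4n}{3}+\tfrac{1000}{3}\bigr)=\tfrac{2n}{3}+\tfrac{7}{3}$, which (again with $n\ge 2m-3$) is large enough relative to the order for the same weakly pancyclic machinery, applied to the whole blue graph, to yield a blue $C_m$.

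The main obstacle is the sub-case in which the dense blue subgraph $B[S]$ (or the blue graph globally) is bipartite, so that the weakly pancyclic theorem cannot directly supply an odd cycle. Here one must exploit the non-bipartiteness of the entire blue graph together with the balanced, dense bipartite structure on $S$: a dense balanced bipartite graph is bipanconnected, providing blue paths of every admissible length between prescribed vertices, and these can be spliced with an odd detour coming from the ambient odd cycle to realise a blue cycle of length exactly $m$. Controlling the parities and hitting the exact length $m$ throughout the full range $n\in[2m-3,4m-14]$ — rather than merely producing some long odd cycle — is the delicate part of the proof, and it is here that the lower endpoint $n\ge 2m-3$ and the oddness of $m$ are genuinely used.
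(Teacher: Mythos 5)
Your lower bound is exactly the paper's, and your overall skeleton for the upper bound (max red degree dichotomy, the common-neighbourhood bound $\Delta(\textnormal{red}[S])\leq n-1$ from $B_n$-freeness, Brandt-type weak pancyclicity, bipanconnectedness for the bipartite sub-case) is the paper's as well. But there are genuine gaps. First, you conflate the two Brandt--Faudree--Goddard lemmas: the $\delta\geq\frac{|V|}{4}+250$ version (Lemma~\ref{Brandt}) requires the graph to be \emph{$2$-connected} and its conclusion admits an exceptional odd-girth-$7$ case; it does not say ``girth $3$ or $4$''. You never verify $2$-connectivity of $B[S]$ or of the global blue graph, whereas the paper spends most of Case~I handling $\mathop{k}(\Bar{G})=1$ and $\mathop{k}(\Bar{G})=0$ separately via Bondy's pancyclicity on the large components. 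Second, your constants do not balance. With the split at $\Delta(\textnormal{red})\approx\frac{4n}{3}+\frac{1000}{3}$, the sparse regime gives blue minimum degree only $\frac{2n+7}{3}$, which exceeds $\frac{N}{4}+250$ only for $n\gtrsim 1991$, so the claimed bound $2n+\frac{1010}{3}$ is not reached for the small $n$ (down to $n=11$) that the theorem covers; and in the dense regime $\delta(B[S])\geq|S|-n$ can be as small as $\frac{n+1000}{3}$, which is below $m$ when $n=2m-3$ and $m$ is large, so your circumference bound does not reach $m$ there either. The paper avoids both problems by splitting at $\frac{3(n+1)}{2}$ and, in the dense case, invoking the \emph{other} Brandt lemma ($\delta\geq\frac{|V|+2}{3}$, no $2$-connectivity, no $+250$), applied to an induced subgraph on exactly $\frac{3(n+1)}{2}$ neighbours of $v$, which yields $\delta(\Bar{H})\geq\frac{n+3}{2}\geq m$ with the inequality tight.

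Third, the bipartite sub-case, which you rightly identify as the delicate part, is where your sketch stops being a proof. The paper's mechanism is: $A$ together with $v$ is a red clique, so $|A|\leq n$, forcing $|B|\geq\frac{n+3}{2}$ and leaving a nonempty external set $X$ of size $\frac{n+1}{2}+\frac{998}{3}$; a counting argument shows some $x\in X$ has blue edges to both an $a\in A$ and a $b\in B$ (otherwise $A\sqcup X_A\sqcup\{v\}$ or $B\sqcup X_B\sqcup X_{AB}\sqcup\{v\}$ manufactures a red $B_n$); then bipanconnectedness of $\Bar{H}$ supplies a blue $a$--$b$ path of length $m-2$ (odd and admissible since $a,b$ lie in different parts), which closes through $x$ into a blue $C_m$. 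Your ``odd detour coming from the ambient odd cycle'' names no such vertex $x$ and gives no parity-correct construction; the existence of that external vertex with blue edges into both sides is the missing idea.
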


\noindent However, we believe that the exact value of $R(B_{n},C_{m})$ in this range will be equal to the lower bound under certain conditions on $n$ and $m$. Based on certain observations, we propose the following conjecture. Theorem~\ref{Peng} partially confirms the conjecture for larger values of $n$. We have successfully proved the conjecture for a subgraph $\mathbb{K}_{2,n}$ of $B_{n}$ and will discuss this result later.

\begin{conjecture}
For each integer $n\geq 3493$, $R(B_{n},C_{m})=2n+3$ if $n\geq 2m+499$ and $m\geq 7$ is an odd integer.
\end{conjecture}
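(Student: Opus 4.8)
The plan is to prove the two inequalities separately: the lower bound by an explicit colouring, and the upper bound by a complement-density argument that feeds a weak-pancyclicity theorem, thereby avoiding the Haxell--Gould--Scott machinery of Theorem~\ref{Haxell et. al.} that forced $n$ to be enormous in Theorem~\ref{Peng}. For $R(B_n,C_m)\geq 2n+3$ I would colour the edges of $\mathbb{K}_{2n+2}$ so that the red graph is two disjoint cliques $\mathbb{K}_{n+1}\sqcup\mathbb{K}_{n+1}$ and the blue graph is $\mathbb{K}_{n+1,n+1}$. Any red edge lies inside one clique and has exactly $n-1$ common red neighbours, one short of the $n$ pages of $B_n$, so there is no red $B_n$; and the blue graph is bipartite, hence has no odd cycle and in particular no $C_m$ with $m$ odd. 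Thus $2n+2$ vertices do not force the pair.

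For the upper bound let $G$ be a graph on $N\geq 2n+\tfrac{1010}{3}$ vertices with no red $B_n$, and set $H=\overline{G}$; the goal is to locate $C_m$ inside $H$. The no-book hypothesis says every edge $uv$ of $G$ satisfies $|N_G(u)\cap N_G(v)|\leq n-1$, and I would exploit two consequences. First, $H$ is non-bipartite: a blue bipartition $(X,Y)$ would make each side a red clique, but a red $\mathbb{K}_{n+2}$ already contains $B_n$, so $|X|,|Y|\leq n+1$ and hence $N\leq 2n+2$, a contradiction; therefore $H$ carries odd cycles. Second, the hypothesis forces blue density locally, since inside any red neighbourhood $R_v=N_G(v)$ every vertex retains at most $n-1$ red neighbours, so the blue graph $H[R_v]$ has minimum degree at least $|R_v|-n$, while globally $\delta(H)=N-1-\Delta(G)$.

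The cycle is then extracted by weak pancyclicity. Casing on $\Delta(G)$, I would produce a blue subgraph $H'$ that is $2$-connected, non-bipartite, of order comparable to $N$, and with $\delta(H')\geq |H'|/4+250$, and invoke the theorem of Brandt, Faudree and Goddard that such graphs are weakly pancyclic with girth $3$ or $4$. Since $H'$ is $2$-connected its circumference is at least $2\delta(H')$, which dominates $m\leq\tfrac{n+3}{2}$, so every length between $3$ (or $4$) and the circumference is realised, in particular the odd length $m\geq 7$; this yields the blue $C_m$. The constant $\tfrac{1010}{3}$ is exactly what makes the two clean regimes meet: when $\Delta(G)$ is small, $\delta(H)=N-1-\Delta(G)$ already clears the Brandt--Faudree--Goddard threshold for all of $H$, and when $\Delta(G)$ is large a single red neighbourhood $R_v$ is dense and large enough in blue to host $H'$.

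The main obstacle is bridging the intermediate regime uniformly in $n$. The threshold theorem needs its host graph to have order at least an absolute constant of size about $1000/3$, so when $n$ is small one cannot localise to a small red neighbourhood $R_v$ at all; instead the argument must retain essentially the whole vertex set, delete the few vertices of very high red degree, and then certify the $2$-connectivity, non-bipartiteness, minimum-degree bound, and circumference $\geq m$ of what survives. (A complementary mechanism for hitting the exact odd length is that the almost-complete-bipartite blue structure surviving from the extremal two-clique configuration is bipanconnected, so a single blue edge inside one side adjusts parity to give a blue cycle of length exactly $m$.) Carrying this cleaning step through for every odd $m\geq 7$ and every $n\geq 2m-3$, and verifying that the surviving subgraph simultaneously satisfies all hypotheses of the weak-pancyclicity theorem, is the technical heart of the argument and is where the precise value $\tfrac{1010}{3}$ is pinned down.
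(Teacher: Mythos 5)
The statement you are addressing is stated in the paper as an open \emph{conjecture}: the paper itself proves only the weaker bound $2n+3\leq R(B_{n},C_{m})\leq 2n+\frac{1010}{3}$ (Theorem~\ref{main theorem 1}) and the exact value $2n+3$ for the subgraph $\mathbb{K}_{2,n}$ (Theorem~\ref{main theorem 2}); there is no proof of the conjecture to compare against. Your lower-bound construction (red graph $\mathbb{K}_{n+1}\sqcup\mathbb{K}_{n+1}$, blue graph $\mathbb{K}_{n+1,n+1}$) is correct and identical to the paper's. But your upper-bound argument starts from a host graph on $N\geq 2n+\frac{1010}{3}$ vertices, so at best it re-derives Theorem~\ref{main theorem 1}; the conjecture requires the contradiction to be reached on exactly $2n+3$ vertices, and you never explain how to shed the extra $\approx 337$ vertices. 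Your own remark that ``the constant $\frac{1010}{3}$ is exactly what makes the two clean regimes meet'' concedes the difficulty rather than resolving it: if one shifts the case split to $\Delta(G)<\frac{3(n+1)}{2}-250$ so that Case~I (via Lemma~\ref{Brandt}, using $m\leq\frac{n+1}{2}-250$) works on $2n+3$ vertices, then in Case~II the neighbourhood $H=G[N_{G}(v)]$ has only $\frac{3(n+1)}{2}-250$ vertices, whence $\delta(\bar H)\geq\frac{n+1}{2}-249$, and the bipanconnectedness hypothesis $\delta(\bar H)\geq\frac{|A|}{2}+1$ of Lemma~\ref{Hui} fails because $|A|$ can be as large as $n$. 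So the two regimes do not meet at $2n+3$ by this route.

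The deeper obstruction, which the paper spells out in its concluding remarks and which your proposal does not touch, is the asymmetry between $B_{n}$-freeness and $\mathbb{K}_{2,n}$-freeness. Lemma~\ref{intersection lemma} bounds $|N_{G}(x)\cap N_{G}(y)|$ only for \emph{adjacent} pairs $x,y$, because the book has a base edge. In the proof of Theorem~\ref{main theorem 2}, the bipartite subcase of Case~II is rescued by taking a vertex $x$ outside $N_{G}(v)\cup\{v\}$ and applying Corollary~\ref{intersection lemma 2} to the \emph{non-adjacent} pair $x,v$ to obtain $|N_{H}(x)|\leq n-1$, hence a large degree of $x$ into $H$ in the complement and a well-controlled graph $\bar H_{1}$. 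For $B_{n}$ this step is unavailable: since $x\not\sim v$ in $G$, $B_{n}$-freeness gives no control on $|N_{G}(x)\cap N_{G}(v)|$, and the vertex $x$ you need in order to close up the odd cycle of length exactly $m$ may have very small degree into $N_{G}(v)$ in the complement. Any proof of the conjecture must supply a new idea precisely here; your proposal does not, so it establishes only Theorem~\ref{main theorem 1}, not the conjectured exact value.
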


Let $G$ be a connected graph and $H$ be any graph. It is well known that 
\begin{equation*}
R(G,H)\geq (|G|-1)(\chi(H)-1)+\sigma(H),
\end{equation*}
where $\chi(H)$ is the chromatic number of $H$ and $\sigma(H)$ is the size of the smallest colour class with respect to $\chi(H)$-colouring. The graph $G$ is called a \emph{$H$-good} graph if the equality holds in the above inequation. Bondy and Erd\"{o}s \cite{MR317991} studied the \emph{Ramsey goodness} of $(C_{m}, \mathbb{K}_{r_{1}, r_{2},\ldots,r_{t}})$. Pokrovskiy and Sudakov\cite{MR4142765} extended this result. They proved the following.

\begin{theorem}\cite{MR4142765}
For each positive integer $r_{1},\ldots r_{t}$, satisfying $r_{t}\geq r_{t-1}\geq \ldots\geq r_{1}$ and $r_{i}\geq i^{22}$ and $m\geq 10^{60}r_{t}$,
\begin{equation*}
R(\mathbb{K}_{r_{1},\ldots,r_{t}}, C_{m})=(t-1)(m-1)+r_{1}.
\end{equation*}
\end{theorem}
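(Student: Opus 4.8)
The plan is to prove the matching lower and upper bounds separately, reading the formula through the goodness lens with the connected graph $G=C_{m}$ and $H=\mathbb{K}_{r_{1},\ldots,r_{t}}$, so that $|G|=m$, $\chi(H)=t$ and $\sigma(H)=r_{1}$ (the smallest colour class of the natural $t$-colouring). The lower bound is the standard goodness construction and carries no real content: on $N=(t-1)(m-1)+(r_{1}-1)$ vertices I would take $t-1$ pairwise disjoint \emph{blue} cliques of order $m-1$ together with a leftover set $S$ of order $r_{1}-1$, colour every edge inside $S$ and every edge between distinct parts \emph{red}, and leave the cliques blue. Then each blue component has fewer than $m$ vertices, so there is no blue $C_{m}$; and in the red graph each of the $t-1$ cliques can feed at most one part of a putative $\mathbb{K}_{r_{1},\ldots,r_{t}}$ (two vertices of the same clique are blue-adjacent, hence cannot lie in different parts), while $S$ alone has only $r_{1}-1<r_{1}$ vertices and so cannot supply a whole part. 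Thus the $t$-th part has nowhere to go, giving $R(\mathbb{K}_{r_{1},\ldots,r_{t}},C_{m})\geq N+1=(t-1)(m-1)+r_{1}$.

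For the upper bound I would argue by induction on the number of parts $t$, assuming toward a contradiction that a red/blue colouring of $\mathbb{K}_{N}$ with $N=(t-1)(m-1)+r_{1}$ contains \emph{neither} a red $\mathbb{K}_{r_{1},\ldots,r_{t}}$ \emph{nor} a blue $C_{m}$. The base case $t=1$ is trivial, since $\mathbb{K}_{r_{1}}$ is edgeless and any $r_{1}$ vertices realise it in red. For the inductive step I would try to \emph{peel off the largest part}: I seek a set $V_{t}$ of $r_{t}$ vertices whose common red-neighbourhood $W$ satisfies
\[
|W|\ \geq\ N-(m-1)=(t-2)(m-1)+r_{1}=R(\mathbb{K}_{r_{1},\ldots,r_{t-1}},C_{m}),
\]
the last equality being the induction hypothesis (whose hypotheses $r_{i}\geq i^{22}$ and $m\geq 10^{60}r_{t-1}$ persist because $r_{t-1}\leq r_{t}$). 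If such a $V_{t}$ exists, then inside $W$ the inductive statement produces a red $\mathbb{K}_{r_{1},\ldots,r_{t-1}}$ (the alternative, a blue $C_{m}$, is excluded by assumption); since every vertex of $W$ is red-adjacent to every vertex of $V_{t}$, adjoining $V_{t}$ yields a red $\mathbb{K}_{r_{1},\ldots,r_{t}}$, a contradiction.

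It therefore remains to handle the case where the peeling fails, i.e.\ \emph{no} $r_{t}$ vertices have a common red-neighbourhood of size at least $N-(m-1)$. This is a strong sparsity condition on the red graph: every $r_{t}$-set loses more than $m-1-r_{t}$ vertices to its blue side, which I would leverage to show that the blue graph contains a subgraph on $n'$ vertices with minimum degree at least $\epsilon n'$ whose longest odd cycle has length at least $m$. Feeding this into Theorem~\ref{Haxell et. al.} (legitimate because $m$ is odd and, by $m\geq 10^{60}r_{t}$, exceeds the constant $K(\epsilon)$) then produces a blue cycle of length exactly $m$, i.e.\ a blue $C_{m}$, the final contradiction. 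This last extraction is the heart of the argument and the main obstacle: converting the codegree/expansion failure into a blue subgraph with genuinely \emph{linear} minimum degree and circumference $\geq m$, while simultaneously making sure the discarded low-degree vertices cannot be re-used to rebuild the red multipartite graph. I expect this step to require a stability/Erd\H{o}s–Stone-type density analysis of the red graph together with a longest-cycle (Dirac-type) argument in the blue graph, and it is precisely here that the quantitative hypotheses $r_{i}\geq i^{22}$, $m\geq 10^{60}r_{t}$ and the constant $K(\epsilon)$ are consumed; the remainder of the proof is bookkeeping around the induction.
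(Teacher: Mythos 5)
This statement is quoted from Pokrovskiy and Sudakov \cite{MR4142765}; the present paper states it without proof and only uses the corollary obtained by setting $t=2$, $r_{1}=2$, $r_{2}=n$, so there is no in-paper proof to compare against and your proposal must be judged on its own terms. Your lower bound is correct: it is the standard Burr--Erd\H{o}s goodness construction, and your verification that the red graph contains no $\mathbb{K}_{r_{1},\ldots,r_{t}}$ is sound. The peeling step of the upper bound is also sound as far as it goes: if some $r_{t}$-set has common red-neighbourhood of size at least $N-(m-1)$, the induction closes exactly as you describe.

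The genuine gap is the case you yourself flag as ``the heart of the argument'': when no $r_{t}$-set has a large common red-neighbourhood, you assert that the blue graph contains a subgraph of linearly large minimum degree whose longest odd cycle has length at least $m$, and this assertion \emph{is} the theorem rather than a reduction of it. What the failure of peeling actually gives is that, for each $r_{t}$-set $V_{t}$, more than $m-1-r_{t}$ vertices send at least one blue edge into $V_{t}$; by pigeonhole this yields individual vertices of blue degree about $m/r_{t}$, but not a subgraph of large \emph{minimum} blue degree, and it says nothing about $2$-connectivity, non-bipartiteness, or odd circumference at least $m$ --- all of which are needed before Theorem~\ref{Haxell et. al.} can return a $C_{m}$ rather than merely some cycle. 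The quantitative side also fails to close: the best density parameter extractable this way is $\epsilon\approx\frac{1}{2r_{t}(t-1)}$, so the threshold $\frac{45K(\epsilon)}{\epsilon^{4}}$ grows like a power of $r_{t}$, while the hypothesis supplies only $m\geq 10^{60}r_{t}$, linear in $r_{t}$, and $r_{t}$ may be arbitrarily large compared with $t$. The actual argument in \cite{MR4142765} does not proceed by peeling one part and invoking a minimum-degree cycle lemma; it relies on a structural dichotomy for the red graph together with P\'osa-type rotation--extension arguments to manufacture a blue cycle of exactly the prescribed length, and that is where the hypotheses $r_{i}\geq i^{22}$ and $m\geq 10^{60}r_{t}$ are genuinely spent. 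Your outline correctly locates the difficulty but leaves its central step unproved.
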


\noindent From this result, we deduce the following corollary by plugging in $t=2$; $r_{1}=2$ and $r_{2}=n$. This shows that $C_{m}$ is a $\mathbb{K}_{2,n}$-good if $m$ and $n$ are sufficiently large.

\begin{corollary}
For each positive integer $m$ and $n$ with $n\geq 2^{22}$ and $m\geq 10^{60}n$, $R(\mathbb{K}_{2,n},C_{m})= m+1$. 
\end{corollary}

\noindent Note that, in the above result, the value of $R(\mathbb{K}_{2,n}, C_{m})$ depends on the parameter $m$ only. But in this article, we have obtained an exact value of the same involving the parameter $n$. An odd cycle has a chromatic number $3$ with the smallest colour class of size $1$. Therefore, if $m$ is an odd integer and if the equality $R(\mathbb{K}_{2,n},C_{m})=2n+3$ holds then $\mathbb{K}_{2,n}$ is $C_{m}$-good. From Theorem~\ref{main theorem 1}, we have observed that if $m$ and $n$ lie within a specific range, then the deletion of the edge from the base of $B_{n}$ helps to find the exact value of $R(\mathbb{K}_{2,n},C_{m})$. Consequently, we conclude that $\mathbb{K}_{2,n}$ is $C_{m}$-good. It is worth noting that the Ramsey goodness property of $(C_{m},\mathbb{K}_{2,n})$ is getting exchanged (i.e., $\mathbb{K}_{2,n}$ is $C_{m}$-good) with the variation of the parameters $n$ and $m$. The result we obtain is as follows:

\begin{theorem}\label{main theorem 2}
For each odd integer $m\geq 7$ and for each integer $n\geq2m+499$ and $n\geq 3493$,  
\begin{equation*}
 R(\mathbb{K}_{2,n},C_{m})=2n+3.    
\end{equation*}
\end{theorem}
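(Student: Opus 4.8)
For $R(\mathbb{K}_{2,n},C_m)\ge 2n+3$ I would exhibit a colouring of $\mathbb{K}_{2n+2}$ with no red $\mathbb{K}_{2,n}$ and no blue $C_m$. Partition the $2n+2$ vertices into two sets $X,Y$ of size $n+1$, colour every edge inside $X$ and inside $Y$ red and every edge between $X$ and $Y$ blue. The red graph is then a disjoint union of two copies of $\mathbb{K}_{n+1}$: two vertices in the same part have only $n-1$ common red neighbours and two vertices in different parts have none, so there is no red $\mathbb{K}_{2,n}$. The blue graph is $\mathbb{K}_{n+1,n+1}$, which is bipartite and hence contains no $C_m$ for odd $m$. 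This is exactly the goodness bound $(|\mathbb{K}_{2,n}|-1)(\chi(C_m)-1)+\sigma(C_m)=(n+1)\cdot 2+1=2n+3$, so it is the value we are targeting.

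\textbf{Upper bound, set-up.} Suppose for contradiction that a red/blue colouring of $\mathbb{K}_{2n+3}$ has neither a red $\mathbb{K}_{2,n}$ nor a blue $C_m$; write $B$ for the blue graph on the vertex set $V$ with $|V|=N=2n+3$. The first step is to convert the two forbidden configurations into constraints on $B$. Since a red $\mathbb{K}_{2,n}$ is precisely a pair of vertices with at least $n$ common red neighbours, its absence means that for every pair $u\neq v$ the set of vertices (other than $u,v$) blue-adjacent to $u$ or to $v$ has size at least $N-2-(n-1)=n+2$; applying this to the two vertices of smallest blue degree shows that at most one vertex of $B$ has blue degree below $(n+2)/2$. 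Thus, after deleting at most one vertex, $B$ has minimum degree at least roughly $n/2$. Secondly, a red $\mathbb{K}_{2,n}$ also arises whenever $B$ has an independent set of size $n+2$ (it induces a red $\mathbb{K}_{n+2}$), so we may assume $\alpha(B)\le n+1$.

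\textbf{The dichotomy.} The proof now turns on $\alpha(B)$. If $\alpha(B)\ge n+2$, the corresponding independent set is a red clique $\mathbb{K}_{n+2}\supseteq\mathbb{K}_{2,n}$, a contradiction; so $\alpha(B)\le n+1$, which forces $\chi(B)\ge\lceil N/\alpha(B)\rceil\ge 3$ and hence makes $B$ non-bipartite, while the degree step makes it dense. The goal in this regime is to build a blue cycle of the \emph{exact} odd length $m$. Here I would invoke the weakly-pancyclic and bipanconnectedness machinery: inside a sufficiently dense balanced bipartite blue subgraph one has, between the endpoints $x,y$ of a blue edge lying inside one side, blue paths of every even length up to a linear bound; choosing the length $m-1$ (even, since $m$ is odd) and closing it with the edge $xy$ produces a blue $C_m$. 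The hypotheses $n\ge 2m+499$ and $n\ge 3493$ are exactly what make the length estimates go through, since the target length satisfies $m\le (n-499)/2$, comfortably below both the minimum degree $\approx n/2$ and the circumference, leaving ample room for a path of length $m-1$.

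\textbf{Main obstacle.} The lower bound and the large-independence case are routine; the difficulty is the dense non-bipartite case. The minimum degree extracted from the forbidden-$\mathbb{K}_{2,n}$ condition is only about $N/4$, which lies below the $N/3$ threshold of the standard weakly-pancyclic theorems, so one cannot feed $B$ into them as a black box. The crux is therefore to locate a dense, balanced, $2$-connected bipartite blue subgraph on which bipanconnectedness genuinely applies, to guarantee that it realises blue paths of all even lengths up to at least $m-1$, and to position a parity-violating blue edge so that it closes such a path into a cycle of length exactly $m$. Carefully absorbing the single possible low-degree vertex, certifying the required connectivity, and verifying that the explicit constants $499$ and $3493$ keep every length inequality valid are the points where the real work lies.
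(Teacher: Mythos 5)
Your lower bound coincides with the paper's (two disjoint red copies of $\mathbb{K}_{n+1}$, blue $\mathbb{K}_{n+1,n+1}$ between them) and is fine. The upper bound, however, has a genuine gap that you have in effect flagged yourself. The only degree information you extract from $\mathbb{K}_{2,n}$-freeness is that all but at most one vertex has blue degree at least about $\tfrac{n+2}{2}$; on $N=2n+3$ vertices this is essentially $N/4$, which falls short by an additive constant of roughly $250$ of the hypothesis $\delta\ge |V|/4+250$ in the Brandt--Faudree--Goddard lemma for $2$-connected non-bipartite graphs, and far short of the $(|V|+2)/3$ threshold of the unconditional version. So the blue graph cannot be fed into the weakly-pancyclic machinery as a whole, and your proposal stops exactly where the argument must begin. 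The missing idea is a dichotomy on the \emph{red} maximum degree. If $\Delta(G)<\tfrac{3(n+1)}{2}-250$, then $\delta(\bar{G})\ge\tfrac{n+1}{2}+251\ge |V(\bar{G})|/4+250$ with room to spare, and one treats the connectivity cases $k(\bar{G})\ge 2$, $=1$, $=0$ separately (in the disconnected cases each component has minimum degree exceeding half its order and is pancyclic by Bondy's theorem). If instead some vertex $v$ has red degree at least $\tfrac{3(n+1)}{2}-250$, one passes to $H=G[N_G(v)]$: since $G$ is $\mathbb{K}_{2,n}$-free, $\Delta(H)\le n-1$, whence $\delta(\bar{H})\ge\tfrac{n+1}{2}-249\ge|V(\bar{H})|/4+250$, and the machinery applies to $\bar{H}$. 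This case split, and the connectivity analysis inside each case, are absent from your outline.

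The one subcase where your ``bipanconnectedness plus a parity-violating edge'' idea is relevant is when $\bar{H}$ turns out to be bipartite with parts $A\sqcup B$, but as stated the parity does not work: an even blue path of length $m-1$ between two vertices of the same part, closed by a blue edge inside that part, presupposes an edge that the bipartiteness of the blue subgraph forbids. The paper's route here is different: it finds a vertex $x$ outside $V(H)\cup\{v\}$ with blue edges to both $A$ and $B$ (otherwise every external vertex is red-complete to $A$ or to $B$ and a counting argument produces a red $\mathbb{K}_{2,n}$), forms $H_1=G[V(H)\cup\{x\}]$, and uses the fact that $|N_G(x)\cap N_G(v)|\le n-1$ even though $x$ and $v$ are non-adjacent --- precisely the point where $\mathbb{K}_{2,n}$ differs from $B_n$ --- to get $\delta(\bar{H_1})\ge\tfrac{n+1}{2}-249\ge|V(\bar{H_1})|/4+250$; it then shows $\bar{H_1}$ is non-bipartite and $2$-connected and applies the weakly-pancyclic lemma to $\bar{H_1}$ itself. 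None of these steps is carried out in your proposal, so what you have is a correct lower bound plus a framework, not a proof of the upper bound.
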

\noindent It is important to note that, since $\mathbb{K}_{2,n}$ is a subgraph of $B_{n}$, the exact value of $R(\mathbb{K}_{2,n},C_{m})$ can be obtained directly from Theorem~\ref{Peng} under the same condition on $n$ and $m$. In particular, $R(\mathbb{K}_{2,n},C_{m})=2n+3$ if $n\in [\frac{3m-5}{2},4m]$, $m$ odd and $n\geq \frac{33750000}{\epsilon^{9}}$. However, in our result, we have established the same value for a significantly smaller lower bound on $n$.

\section{Preliminary Results}
Let $G=(V, E)$ be a graph with vertex set $V(G)$ and edge set $E(G)$. The order of the graph is the number of vertices in it and is denoted by $|V(G)|$. For a vertex $v\in V(G)$, $N_{G}(v)$ denotes the set of neighbours (i.e., the vertices adjacent to it) of $v$ in $G$. For a graph $G$, minimum and maximum degrees are denoted by $\delta (G)$ and $\Delta (G)$ respectively. The \emph{circumference} and \emph{girth} of a graph $G$ refer to the lengths of the longest and shortest cycles, respectively. The circumference is denoted by $\mathop{c}(G)$ and the girth is denoted by $\mathop{g}(G)$. The \emph{connectivity} of a graph $G$ is the minimum number of vertices that need to be deleted so that the graph becomes disconnected. It is denoted by $\mathop{k}(G)$. A connected graph is said to be \emph{$k$-connected}, where $k\geq2$ is an integer, if deletion of each subset $S$ of $V(G)$ with $|S|<k$ does not make the graph disconnected. Thus, if $G$ is $k$-connected, then $\mathop{k}(G)\geq k$. Conversely, if $\mathop{k}(G)\geq k$, and if $G$ is not $k$-connected, then there exists a subset $S$ of $V(G)$ with $|S|<k$, such that deletion of $S$ makes the graph disconnected. Hence $\mathop{k}(G)\leq|S|$, which contradicts our assumption that $\mathop{k}(G)\geq k$. This means if $\mathop{k}(G)\geq k$, then $G$ is $k$-connected. Therefore, $G$ is $k$-connected if and only if $\mathop{k}(G)\geq k$. For a graph $H$ and a positive integer $n$, \emph{extremal number}, denoted by $\ex(n, H)$, is the maximum number of edges of a graph on $n$ vertices which is $H$-free. This implies that any graph on $n$ vertices with at least $\ex(n,H)$ edges contains a copy of $H$. Throughout this article, we consider only simple graphs.

For a graph $G$, the lower bound of its circumference depends on the minimum degree. Imposing additional criteria of ``$2$-connectivity" on the graph $G$ leads to an improved bound. The following result is due to Dirac. 

\begin{lemma}\cite{MR47308}\label{Dirac}
For a graph $G$, if each vertex has degree $d$, then it has a cycle of length at least $d+1$. If $G$ is $2$-connected, then it contains a cycle of length at least $2d$.
\end{lemma}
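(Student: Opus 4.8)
The plan is to prove the two assertions separately by the classical longest-structure method. For the first assertion I would take a longest path $P = v_0 v_1 \cdots v_k$ in $G$. Because $P$ cannot be extended, every neighbour of the endpoint $v_k$ must already lie on $P$; since $v_k$ has at least $d$ neighbours, the path has at least $d+1$ vertices, and the neighbour of $v_k$ of smallest index, say $v_i$, satisfies $i \le k-d$. Closing up along $P$ then yields the cycle $v_i v_{i+1} \cdots v_k v_i$, whose length is $k - i + 1 \ge d+1$, as required. This half uses only the degree hypothesis and no connectivity.

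For the second assertion I would argue with a longest cycle $C$ of length $c = \mathop{c}(G)$ and aim to show $c \ge \min\{2d, n\}$; the stated bound $2d$ is then the relevant case $n \ge 2d$. If $c = n$ the cycle is Hamiltonian and there is nothing to prove, so I assume there is a vertex $v \notin V(C)$. Here the hypothesis of $2$-connectivity enters: by a Menger-type argument there are two vertex-disjoint paths from $v$ to $C$ meeting $C$ in distinct vertices $s$ and $t$, which I splice into a single $s$--$t$ path $P$ that is internally disjoint from $C$ and has length at least $2$. The two arcs $C_1, C_2$ into which $s$ and $t$ split $C$ can each be exchanged for $P$ to form a new cycle; maximality of $C$ forces $|P| \le |C_1|$ and $|P| \le |C_2|$, so both arcs are comparatively long.

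The final step is to convert the minimum-degree condition into the quantitative bound $c \ge 2d$. The idea is to count, for a suitable vertex (or for the endpoints of a maximal off-cycle path), how many neighbours it can have on $C$, and to exploit the maximality of $C$ to show that consecutive such neighbours along $C$ must be separated by arcs of length at least two; a vertex with $t$ neighbours on $C$ then forces $c \ge 2t$, while the degree hypothesis supplies $t \ge d$ once any off-cycle neighbours are accounted for. I expect the main obstacle to be exactly this bookkeeping when the vertices outside $C$ form a nontrivial component: one must pass to a longest path inside that component so that both of its endpoints have all their non-component neighbours on $C$, and then balance the neighbours lying inside the component against those on $C$. Packaging this count so that it cleanly delivers $c \ge 2d$ is the delicate part; everything else is the standard extremal-path and connectivity machinery.
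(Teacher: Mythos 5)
The paper offers no proof of this lemma at all --- it is quoted verbatim from Dirac's 1952 paper as a known result --- so your attempt has to be judged on its own terms. Your proof of the first assertion is complete and correct: taking a longest path $v_0v_1\cdots v_k$, all neighbours of $v_k$ lie on the path, the smallest-indexed one $v_i$ satisfies $i\le k-d$, and the cycle $v_iv_{i+1}\cdots v_kv_i$ has length $k-i+1\ge d+1$. You are also right that the second assertion is really $c(G)\ge\min\{2d,|V(G)|\}$, with $2d$ being the relevant case here.

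For the second assertion, however, what you have written is a plan rather than a proof, and the step you defer is exactly where the theorem lives. Your ingredients are the right ones (longest cycle $C$, the fan form of Menger's theorem to produce a $C$-path, and the observation that maximality forces every arc between two attachment vertices to be at least as long as the detour), but the count you actually state does not reach $2d$. The clean implication ``a vertex with $t$ neighbours on $C$ forces $c\ge 2t$'' applies to a single off-cycle vertex, and since that vertex may have up to $m$ neighbours inside its component $H$ of $G-V(C)$ (where $m$ is the length of a longest path $u_0\cdots u_m$ in $H$), this yields only $c\ge 2(d-m)$. Recovering the missing $2m$ is the delicate part you name but do not execute: one must take the neighbour sets $S$ and $T$ of $u_0$ and $u_m$ on $C$, observe that an arc of $C$ between consecutive elements of $S\cup T$ has length at least $2$ when both lie in $S$ (or both in $T$) and at least $m+2$ when one lies in $S$ and the other in $T$ (detouring through the whole path $u_0\cdots u_m$), and then balance $|S|,|T|\ge d-m$ against the number and lengths of these arcs, treating the degenerate configurations where $S=T$, where $|S\cup T|=2$, or where $d\le m+1$ so that $S$ or $T$ could a priori be empty and $2$-connectivity must be invoked again to guarantee two distinct attachment vertices. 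As written, your argument establishes $c\ge 2(d-m)$ and $c\ge 2\cdot(\text{number of attachment vertices of }H)$, neither of which is the claimed bound; to count this as a proof you would need to carry out that final case analysis (or replace it by the standard crossing-chords argument on a longest path).
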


\noindent The above result shows that if a graph $G$ has minimum degree $\delta(G)$, then $\mathop{c}(G)\geq \delta(G)+1$. However, in our proof, we use $\mathop{c}(G)\geq \delta(G)$ for the simplicity of calculation. The following result suggests that a graph with a higher minimum degree cannot possess a girth greater than 4. In \cite{MR4234150}, the author mentioned the following lemma. They omitted the proof and mentioned it as a simple fact which can be checked using Breadth-First-Search.

\begin{lemma}\cite{MR4234150}\label{fact check}
For each $\epsilon>0$, if $G$ is a graph with $|V(G)|>\frac{1}{\epsilon^2}$ and $\frac{\delta(G)}{|V(G)|}\geq \epsilon$, then $\mathop{g}(G)\leq 4$.
\end{lemma}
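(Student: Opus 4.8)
The plan is to argue by contradiction, showing that a graph of girth at least $5$ cannot have minimum degree as large as $\epsilon|V(G)|$ once $|V(G)|>\epsilon^{-2}$. This is the classical \emph{Moore bound} argument, realised exactly by the Breadth-First-Search that the authors allude to. Write $n=|V(G)|$ and $\delta=\delta(G)$, and first note that the hypotheses force $\delta\geq\epsilon n>\epsilon\cdot\epsilon^{-2}=\epsilon^{-1}>1$ (here $\delta\leq n-1$ forces $\epsilon<1$, so $\epsilon^{-1}>1$), whence $\delta\geq 2$ and $G$ contains at least one cycle. Thus $\mathop{g}(G)$ is a well-defined finite quantity, and it makes sense to suppose, for contradiction, that $\mathop{g}(G)\geq 5$.

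Next I would run BFS from an arbitrary root $v$ and record the first three layers $L_0=\{v\}$, $L_1=N_{G}(v)$, and $L_2=\bigcup_{u\in L_1}\bigl(N_{G}(u)\setminus(L_0\cup L_1)\bigr)$. The girth assumption does all the work here. Since $G$ has no triangle, $L_1$ is an independent set, so for each $u\in L_1$ the only neighbour of $u$ inside $L_0\cup L_1$ is $v$; thus $u$ contributes at least $\delta-1$ vertices to $L_2$. Since $G$ has no $4$-cycle, no two distinct vertices $u_1,u_2\in L_1$ can share a common neighbour other than $v$ (such a neighbour $w$ would close the $4$-cycle $v\,u_1\,w\,u_2\,v$), so the sets $N_{G}(u)\setminus\{v\}$ are pairwise disjoint as $u$ ranges over $L_1$. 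Counting the three layers without overlap therefore gives
\begin{equation*}
n\;\geq\;|L_0|+|L_1|+|L_2|\;\geq\;1+\delta+\delta(\delta-1)\;=\;1+\delta^{2}.
\end{equation*}

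Finally I would combine this Moore bound with the degree hypothesis. From $n\geq 1+\delta^{2}$ we get $\delta^{2}\leq n-1<n$, and substituting $\delta\geq\epsilon n$ yields $\epsilon^{2}n^{2}\leq\delta^{2}<n$, i.e. $\epsilon^{2}n<1$, that is $n<\epsilon^{-2}$. This contradicts the standing hypothesis $n>\epsilon^{-2}$, so the assumption $\mathop{g}(G)\geq 5$ is untenable and $\mathop{g}(G)\leq 4$, as claimed. The one point requiring genuine care --- which I would flag as the crux rather than a true obstacle --- is the disjointness of the second-neighbourhood contributions to $L_2$, since this is exactly where both the triangle-free and the $C_4$-free consequences of $\mathop{g}(G)\geq 5$ are simultaneously invoked; the rest is bookkeeping.
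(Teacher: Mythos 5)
Your proof is correct, and it is worth noting that it actually does more than the paper does: the paper never proves Lemma~\ref{fact check} itself, but only cites it from \cite{MR4234150} (where it is stated as a BFS exercise) and instead proves the weaker Lemma~\ref{fact checked}, which needs the stronger hypothesis $|V(G)|\geq \frac{10}{\epsilon^{2}}$. The paper's route is via dependent random choice: it derives $\ex(n,\mathbb{K}_{2,2})\leq\sqrt{5/2}\,n^{3/2}$ from the Fox--Sudakov lemmas, observes that $\delta(G)\geq\epsilon n$ forces $|E(G)|\geq\frac{\epsilon}{2}n^{2}$ to exceed this extremal number once $n\geq\frac{10}{\epsilon^{2}}$, and concludes that $G$ contains a $C_{4}$. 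Your Moore-bound argument --- counting $1+\delta+\delta(\delta-1)=1+\delta^{2}$ vertices in the first three BFS layers under the assumption that $G$ is both triangle-free and $C_{4}$-free, and contradicting $n>\epsilon^{-2}$ --- is exactly the Breadth-First-Search proof the original source alludes to. It is more elementary, it yields the sharp threshold $\epsilon^{-2}$ rather than $10\epsilon^{-2}$ (the paper explicitly notes in a footnote that using its weaker variant would cost an extra condition $n\geq160$ in Theorem~\ref{main theorem 1}), and your preliminary observation that $\delta\geq2$ (so that the girth is finite) is a small but genuine point of care that the contradiction framing requires. The only trade-off is that the paper's extremal-number route generalises more readily to forbidding larger bipartite patterns, which is presumably why the author showcases dependent random choice; for the statement at hand your argument is the cleaner and stronger one.
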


\noindent However, we establish the above result for graphs satisfying $|V(G)|>\frac{10}{\epsilon^{2}}$ by using the \emph{dependent random choice} technique. We briefly describe the proof here. The proof relies on the following auxiliary results by Fox and Sudakov \cite{MR2768884}.

\begin{lemma}\cite[Lemma~2.1]{MR2768884}\label{Existence of a rich set}
Let $a, m, r$ be positive integers. $G=(V,E)$ be a graph on $n$ vertices with average degree $d= \frac{2|E|}{n}$. If there exists a positive integer $t$ such that $n(\frac{d}{n})^{t}-\binom{n}{r}(\frac{m}{n})^{t}\geq a$, then there exists a subset $U\subseteq V$ with $|U|\geq a$ such that for each $Q\in \binom{U}{r}$, $|N(Q)|\geq m$.
\end{lemma}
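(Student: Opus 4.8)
The plan is to use the \emph{dependent random choice} method. I would sample a random seed: pick $t$ vertices $v_{1},\dots,v_{t}$ independently and uniformly at random from $V$ (with repetition), and consider their common neighbourhood $A=\bigcap_{i=1}^{t}N_{G}(v_{i})$. The reason for sampling the seed in this way is that it makes the inclusion probabilities factorise: for a fixed vertex $u$, the event $u\in A$ means every one of the $t$ independent choices lands in $N_{G}(u)$, so $\Pr[u\in A]=(\deg(u)/n)^{t}$, and similarly for common neighbourhoods of larger sets.

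First I would estimate the expected size of $A$ by linearity of expectation,
\begin{equation*}
\Ex[|A|]=\sum_{u\in V}\left(\frac{\deg(u)}{n}\right)^{t}=\frac{1}{n^{t}}\sum_{u\in V}\deg(u)^{t}.
\end{equation*}
Applying convexity of the map $x\mapsto x^{t}$ (equivalently the power-mean inequality) together with $\sum_{u}\deg(u)=nd$ gives $\sum_{u}\deg(u)^{t}\geq n\,d^{t}$, and hence $\Ex[|A|]\geq n(d/n)^{t}$.

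Next I would control the ``bad'' configurations, namely the $r$-subsets $Q\in\binom{V}{r}$ with $|N_{G}(Q)|<m$. Let $Y$ be the number of bad $r$-subsets that happen to lie inside $A$. A fixed bad $Q$ is contained in $A$ exactly when every $v_{i}$ is a common neighbour of $Q$, an event of probability $(|N_{G}(Q)|/n)^{t}<(m/n)^{t}$; since there are at most $\binom{n}{r}$ such subsets, linearity of expectation gives $\Ex[Y]\leq\binom{n}{r}(m/n)^{t}$. Combining the two estimates,
\begin{equation*}
\Ex\bigl[|A|-Y\bigr]\geq n\left(\frac{d}{n}\right)^{t}-\binom{n}{r}\left(\frac{m}{n}\right)^{t}\geq a
\end{equation*}
by the hypothesis, so there is at least one outcome of the random choice with $|A|-Y\geq a$.

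Finally, fixing such an outcome, I would perform a clean-up: delete one vertex from each bad $r$-subset contained in $A$. This removes at most $Y$ vertices and destroys every bad configuration, so the surviving set $U\subseteq A$ satisfies $|U|\geq|A|-Y\geq a$ and contains no bad $r$-subset, meaning $|N_{G}(Q)|\geq m$ for every $Q\in\binom{U}{r}$, as required. I expect the main delicate point to be the convexity step that turns the moment $\sum_{u}\deg(u)^{t}$ into a clean lower bound in terms of the average degree $d$; once that inequality is secured, the union bound on bad subsets and the deletion argument are routine applications of the probabilistic method.
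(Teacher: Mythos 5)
Your proposal is correct and is precisely the standard dependent random choice argument of Fox and Sudakov for their Lemma~2.1, which the paper cites without reproducing a proof: random seed of $t$ vertices with repetition, Jensen's inequality to bound $\Ex[|A|]$ from below, a first-moment bound on the bad $r$-subsets, and the deletion step. All steps check out, including the convexity estimate $\sum_{u}\deg(u)^{t}\geq n\,d^{t}$ and the observation that removing one vertex from each bad $r$-subset of $A$ leaves a set $U$ with no bad $r$-subsets.
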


\begin{lemma}\cite[Lemma~3.2]{MR2768884}\label{embedding of bipartite graph}
Let $H$ be a bipartite graph with partition $A\sqcup B$ where $|A|=a, |B|=b$ and $d(v)\leq r$ for all $v\in B$. If $G$ is a graph with a subset $U\subseteq V(G)$ such that $|U|\geq a$ and for all $S\in \binom{U}{r}$, $|N(S)|\geq (a+b)$, then $G$ contains a copy of $H$.
\end{lemma}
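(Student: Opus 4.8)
The plan is to construct an embedding of $H$ into $G$ by a greedy (one-vertex-at-a-time) procedure: first place the side $A$ inside the rich set $U$, and then insert the vertices of $B$ one after another, each time using the common-neighbourhood hypothesis to guarantee that the vertex being inserted has a free landing spot adjacent to all of its already-embedded neighbours.

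Concretely, I would first fix an injection $\phi\colon A\to U$, which exists because $|U|\ge a=|A|$. Then I would list the vertices of $B$ as $v_{1},\dots,v_{b}$ in arbitrary order and extend $\phi$ to them one at a time, maintaining the invariant that the partial map is an injective embedding of the subgraph induced so far. Suppose $\phi$ has been defined on $A$ and on $v_{1},\dots,v_{j-1}$. To place $v_{j}$, set $S=\phi(N_{H}(v_{j}))\subseteq U$, the set of images of its neighbours; since $H$ is bipartite all neighbours of $v_{j}$ lie in $A$, and since $d(v_{j})\le r$ we have $|S|\le r$. If $|S|<r$ I would pad $S$ to a set $S'\subseteq U$ with $|S'|=r$ (here one uses that $U$ is large enough, $|U|\ge r$); because common neighbourhoods are monotone under inclusion, $N(S')\subseteq N(S)$, and the hypothesis applied to the $r$-subset $S'$ gives $|N(S)|\ge|N(S')|\ge a+b$. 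Finally I would count the vertices of $G$ already occupied: these are the $a$ images of $A$ together with the $j-1\le b-1$ images of $v_{1},\dots,v_{j-1}$, at most $a+b-1$ in total. Since $a+b-1<a+b\le|N(S)|$, the set $N(S)$ contains a vertex $w$ not yet used; putting $\phi(v_{j})=w$ keeps $\phi$ injective and makes $v_{j}$ adjacent to $\phi(u)$ for every $u\in N_{H}(v_{j})$, so the invariant survives. Running through all of $B$ yields an embedding of $H$ into $G$.

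The only genuinely delicate point — the step I would be most careful to state correctly — is the padding argument for vertices $v_{j}$ with $d(v_{j})<r$: the hypothesis only controls $|N(S)|$ for $r$-subsets $S$, so one must enlarge $S$ to an $r$-set and invoke monotonicity of the common neighbourhood to transfer the bound down to the smaller set. Everything else is a routine greedy count, and it is worth noting that the threshold $a+b$ appearing in the hypothesis is exactly the largest number of vertices that can be occupied at the moment the final vertex of $B$ is inserted, so the bound is precisely what this argument needs and nothing is wasted.
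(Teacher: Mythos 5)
Your argument is correct and is exactly the standard greedy embedding proof of this lemma from Fox--Sudakov (the paper itself only cites the result and gives no proof): embed $A$ injectively into $U$, then place the vertices of $B$ one at a time, padding each neighbourhood-image to an $r$-set and using $|N(S)|\geq a+b$ together with the count of at most $a+b-1$ occupied vertices. Your flagged caveat that the padding step needs $|U|\geq r$ is the right thing to note (it is harmless here, since in the paper's application $a=r=2$), and nothing further is required.
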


\begin{lemma}\label{fact checked}
For each $\epsilon>0$, if $G$ is a graph on the the $n$ vertices with $\delta(G)\geq \epsilon n$ and $n\geq \frac{10}{\epsilon^{2}}$, then $\mathop{g}(G)\leq 4$.
\end{lemma}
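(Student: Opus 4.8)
The plan is to use the fact that $\mathop{g}(G)\le 4$ is equivalent to $G$ containing a cycle of length $3$ or $4$, and since $C_{4}=\mathbb{K}_{2,2}$, it suffices to exhibit a single copy of $\mathbb{K}_{2,2}$ in $G$: as soon as two vertices $u_{1},u_{2}$ share two distinct common neighbours $v_{1},v_{2}$, the $4$-cycle $u_{1}v_{1}u_{2}v_{2}u_{1}$ appears. The natural route, given the density hypothesis, is to produce such a configuration by \emph{dependent random choice}. Concretely, I would apply Lemma~\ref{Existence of a rich set} with $r=2$, $m=4$, $a=2$ to obtain a set $U$ with $|U|\ge 2$, every pair of which has at least four common neighbours, and then feed $U$ into Lemma~\ref{embedding of bipartite graph} with $H=\mathbb{K}_{2,2}$ (so $a=b=2$, and each vertex on the $B$-side has degree exactly $r=2$ in $\mathbb{K}_{2,2}$).

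The crucial step is verifying the numerical hypothesis of Lemma~\ref{Existence of a rich set}, namely the existence of a positive integer $t$ with $n(\tfrac{d}{n})^{t}-\binom{n}{r}(\tfrac{m}{n})^{t}\ge a$, where $d=\tfrac{2|E|}{n}\ge\delta(G)\ge\epsilon n$ is the average degree (so $\tfrac{d}{n}\ge\epsilon$). Taking $t=2$ together with $r=2$, $m=4$, $a=2$, I would bound the first term below by $n(\tfrac{d}{n})^{2}\ge n\epsilon^{2}$ and compute the second term exactly as $\binom{n}{2}(\tfrac{4}{n})^{2}=8-\tfrac{8}{n}<8$. Hence the left-hand side is at least $n\epsilon^{2}-8$, and the assumption $n\ge\tfrac{10}{\epsilon^{2}}$ gives $n\epsilon^{2}\ge 10$, so the expression is at least $10-8=2=a$. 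This is precisely where the threshold $\tfrac{10}{\epsilon^{2}}$ enters, and the choice $t=2$, $m=4$ is what makes the two competing terms balance at exactly the required level under this bound on $n$.

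With the hypothesis verified, Lemma~\ref{Existence of a rich set} supplies a set $U\subseteq V(G)$ with $|U|\ge 2$ such that $|N(Q)|\ge 4$ for every $Q\in\binom{U}{2}$. Since then $|U|\ge a=2$ and $|N(S)|\ge 4=a+b$ for every $S\in\binom{U}{2}$, Lemma~\ref{embedding of bipartite graph} embeds $\mathbb{K}_{2,2}=C_{4}$ into $G$, and therefore $\mathop{g}(G)\le 4$. The main obstacle is not any deep argument but the bookkeeping in selecting $(t,m,a,r)$: one must take $m=a+b$ so that the embedding lemma applies, take $r$ equal to the maximal $B$-side degree of $C_{4}$, and choose $t$ small enough that the subtracted term remains controlled yet large enough that the main term dominates. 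The value $t=2$ is the delicate choice that reproduces the stated bound $n\ge 10/\epsilon^{2}$ with essentially no slack, so care is needed to use the exact identity $\binom{n}{2}(4/n)^{2}=8-8/n<8$ (valid for every $n\ge 2$) rather than a crude asymptotic estimate.
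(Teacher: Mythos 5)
Your proposal is correct and uses exactly the same machinery as the paper — Lemmas~\ref{Existence of a rich set} and~\ref{embedding of bipartite graph} with the identical parameter choice $t=r=a=2$, $m=4$ — the only (cosmetic) difference being that you apply the dependent-random-choice lemma directly to $G$ via $d\geq\delta(G)\geq\epsilon n$, whereas the paper first derives $\ex(n,\mathbb{K}_{2,2})\leq\sqrt{5/2}\,n^{3/2}$ and then checks that the degree hypothesis forces $|E(G)|$ past that threshold. Your direct route is slightly cleaner and reaches the same bound $n\geq 10/\epsilon^{2}$.
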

\begin{proof}
Consider the graph $H=\mathbb{K}_{2,2}$. From Theorem~\ref{Existence of a rich set}, we plug in $a=b=2$, $m=a+b=4$, and $t=2$. We first prove that $\ex(n, \mathbb{K}_{2,2})\leq cn^{\frac{3}{2}}$ for $c=\sqrt{\frac{5}{2}}$. Let $G_{1}$ be a graph on $n$ vertices with at least $cn^{\frac{3}{2}}$ many edges. If $d$ is the average degree of $G_{1}$, then it satisfies $d=\frac{2|E(G)|}{n}\geq \frac{2cn^{\frac{3}{2}}}{n}= 2cn^{\frac{1}{2}}$. Lemma~\ref{embedding of bipartite graph} implies that, to find an embedding of $\mathbb{K}_{2,2}$ in $G_{1}$ we need to show the existence of an $U\subseteq V(G_{1})$ with $|U|\geq 2$ such that for all $Q\in \binom{U}{2}$, $|N(Q)|\geq 4$. Theorem~\ref{Existence of a rich set} ensures the existence of such a set $U$ if 
\begin{equation*}
n\frac{d^{t}}{n^{t}}- \binom{n}{r}\left(\frac{m}{n}\right)^{t}\geq a.
\end{equation*}
In our case, we have $(2c)^{2}-\binom{n}{2}(\frac{4}{n})^{2}\geq (2c)^{2}-\frac{n^{2}}{2}\frac{16}{n^{2}}\geq 2$ which implies $c\geq \sqrt{\frac{5}{2}}$. Here we consider the minimum value of $c$ which proves that  $\ex(n, \mathbb{K}_{2,2})\leq \sqrt{\frac{5}{2}}n^{\frac{3}{2}}$. To complete the proof, we let $\epsilon>0$. Consider a graph $G$ on $n$ vertices with $\delta(G)\geq \epsilon n$ and $n\geq \frac{10}{\epsilon ^{2}}$ and consequently $\sqrt{n}\geq \frac{\sqrt{10}}{\epsilon}$. Then $|E(G)|\geq \frac{\epsilon n^{2}}{2}= \frac{\epsilon \sqrt{n}n^{\frac{3}{2}}}{2}\geq \sqrt{\frac{5}{2}}n^{\frac{3}{2}}\geq\ex(n, \mathbb{K}_{2,2})$. The last inequality follows from the given condition that $n\geq \frac{10}{\epsilon ^{2}}$ which implies $\sqrt{n}\geq \frac{\sqrt{10}}{\epsilon}$. Thus, $G$ contains a $4$-cycle and consequently $\mathop{g}(G)\leq 4$.  
\end{proof}

In determining the Ramsey number involving cycles, the property of pancyclicity is an important tool. In certain instances, demonstrating the existence of a cycle of a specific length within a graph can be challenging. To deal with such cases, the pancyclicity property provides valuable assistance. A graph $G$ is said to be \emph{pancyclic} if it contains cycles of all lengths $l$ between $3$ and $|V(G)|$. It is \emph{weakly pancyclic} if it contains cycles of all lengths between girth and circumference. The following result shows that the graph with a high minimum degree contains cycles of all lengths. 

\begin{lemma}[Bondy \cite{MR285424}]\label{pancyclic lemma 1}
For each graph $G$, if $\delta(G) \geq \frac{|V(G)|}{2}$, then such $G$ is either a pancyclic graph or $G$ isomorphic to the complete bipartite graph $\mathbb{K}_{r,r}$, where the positive integer $r=\frac{|V(G)|}{2}$.
\end{lemma}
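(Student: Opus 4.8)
The plan is to deduce the statement from three classical ingredients: a Hamiltonicity result, Mantel's extremal bound, and a chord-interpolation argument along a Hamilton cycle. Write $n = |V(G)|$ and assume $n \ge 3$. First I would record that the hypothesis $\delta(G) \ge n/2$ forces $G$ to be $2$-connected: if some single vertex cut split $G$, then a vertex lying in the smaller side would have all its neighbours inside that side together with the cut vertex, giving it degree at most $(n-1)/2 < n/2$, a contradiction. With $2$-connectivity in hand, the second part of Lemma~\ref{Dirac} yields a cycle of length at least $2\delta(G) \ge n$, that is, a Hamilton cycle $C = v_{1}v_{2}\cdots v_{n}v_{1}$. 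This reduces the problem to showing that a Hamiltonian graph with $\delta(G)\ge n/2$ realises every cycle length in $\{3,4,\dots,n\}$, apart from the stated exception.

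Next I would peel off the exceptional graph by an edge count. Since $2\,|E(G)| = \sum_{v} \deg(v) \ge n\cdot\tfrac{n}{2}$, we have $|E(G)| \ge n^{2}/4$. By Mantel's theorem (equivalently $\ex(n,\mathbb{K}_{3}) = \lfloor n^{2}/4\rfloor$, with the unique extremal graph $\mathbb{K}_{r,r}$ when $n=2r$), either $G$ contains a triangle, or $n$ is even and $G \cong \mathbb{K}_{n/2,n/2}$. In the latter case $G$ is bipartite, has no odd cycle, and is precisely the advertised exception, so nothing more is required there. Hence I may assume henceforth that $G$ contains a cycle of length $3$, and the goal becomes to produce cycles of every length between $3$ and $n$.

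The core of the argument is an interpolation along $C$ using its chords. A chord $v_{i}v_{j}$ at cyclic distance $s = j - i$ splits $C$ and yields cycles of lengths $s+1$ and $n-s+1$; thus to obtain a cycle of a prescribed length $\ell$ it suffices to find a chord at cyclic distance $\ell - 1$ or $n+1-\ell$. I would argue by contradiction: suppose some length $\ell \in \{3,\dots,n-1\}$ is missing, so that no chord realises it. The standard device is the \emph{crossing-chord} observation, namely that if two suitably neighbouring chords (for instance $v_{i}v_{j}$ and $v_{i+1}v_{j+1}$) were both present, one could reroute $C$ to build the missing length. Forbidding $C_{\ell}$ therefore forbids such configurations and caps the contribution of each consecutive pair $v_{i},v_{i+1}$, allowing me to bound $\sum_{i}\big(|N(v_{i})| + |N(v_{i+1})|\big)$ over the cycle. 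Summed over all $i$, this would force $|E(G)| \le n^{2}/4$ with equality only in the bipartite case, contradicting either the strict inequality $|E(G)| \ge n(n+1)/4$ (when $n$ is odd) or the presence of the triangle (when $n$ is even and $G \ne \mathbb{K}_{n/2,n/2}$).

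I expect the crossing-chord bookkeeping in the previous paragraph to be the main obstacle: one must select the correct pairing of forbidden configurations for each target $\ell$, treat the two parities of $n$ and the wrap-around of indices modulo $n$ uniformly, and verify that equality in the edge bound pins down $\mathbb{K}_{n/2,n/2}$ exactly, rather than some merely bipartite-looking graph. A clean way to organise this, which I would follow, is Bondy's original downward induction: with the Hamilton cycle fixed, one shows that the existence of a cycle of length $\ell$ (beginning from $\ell = n$) together with either $|E(G)| > n^{2}/4$ or a single triangle propagates to length $\ell-1$, so that a minimal missing length cannot occur; the hypothesis $\delta(G)\ge n/2$ is what supplies enough chords at every scale to fuel each step. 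Finally, the few smallest values of $n$ should be checked by hand to confirm that no sporadic exception beyond $\mathbb{K}_{n/2,n/2}$ arises.
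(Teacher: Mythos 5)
First, a point of comparison: the paper does not prove this lemma at all. It is quoted from Bondy \cite{MR285424} and used as a black box, so there is no internal proof to measure your attempt against; your proposal has to stand on its own as a reconstruction of Bondy's theorem.

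Your reduction steps are correct and are the right ones. Writing $n=|V(G)|$, the hypothesis $\delta(G)\ge n/2$ does rule out a cutvertex (and disconnectedness), so Lemma~\ref{Dirac} gives a cycle of length at least $2\delta(G)\ge n$, i.e.\ a Hamilton cycle; and the degree sum gives $|E(G)|\ge n^{2}/4$, so Mantel's theorem either produces a triangle or identifies $G$ as $\mathbb{K}_{n/2,n/2}$ (for odd $n$ integrality of degrees even forces $|E(G)|\ge n(n+1)/4$, so the exceptional case cannot arise). What remains, however, is precisely Bondy's theorem --- a Hamiltonian graph on $n$ vertices with at least $n^{2}/4$ edges is pancyclic or $\mathbb{K}_{n/2,n/2}$ --- and here your argument stops being a proof. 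The ``crossing-chord bookkeeping'' paragraph describes what a proof would need to accomplish rather than carrying it out: for a missing length $\ell$ you never exhibit the actual pairing of chord positions whose simultaneous presence creates a $C_{\ell}$, nor the resulting inequality (in Bondy's argument, roughly $d(v_{i})+d(v_{i+\ell-2})\le n$ for suitably chosen indices) whose summation yields $|E(G)|\le n^{2}/4$, nor the equality analysis pinning down $\mathbb{K}_{n/2,n/2}$. Moreover, the one concrete configuration you offer, the chords $v_{i}v_{j}$ and $v_{i+1}v_{j+1}$, only reroutes the Hamilton cycle into another cycle of length $n$ ($v_{i}$ to $v_{j}$, back along $C$ to $v_{i+1}$, across to $v_{j+1}$, and forward along $C$ to $v_{i}$ visits every vertex); the pairs that matter for a target length $\ell$ are offset along $C$ by a quantity depending on $\ell$, not by $1$. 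You flag this gap yourself, which is honest, but it means the central counting lemma --- the entire content of the cited result --- is missing from the proposal.
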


\noindent In the article \cite{MR1611825}, the authors studied the properties of weakly pancyclic graphs. The following two results are some of the most important tools in our proof.    

\begin{lemma}[Brandt et al.\cite{MR1611825}]\label{Brandt 2}
For each non-bipartite graph $G$, if $\delta(G)\geq \frac{|V(G)|+2}{3}$, then such $G$ is a weakly pancyclic graph with girth $3$ or $4$.
\end{lemma}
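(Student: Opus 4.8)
The plan is to split the statement into its two assertions — that $\mathop{g}(G)\in\{3,4\}$ and that $G$ realises \emph{every} cycle length between $\mathop{g}(G)$ and $\mathop{c}(G)$ — and to dispose of the (easy) girth bound first, using it to set up the (substantial) gap-filling argument. For the girth I would run a Moore-type neighbourhood count, rather than invoke Lemma~\ref{fact checked}, so as to cover all orders $n$ uniformly. Suppose $\mathop{g}(G)\ge 5$ and fix a vertex $v$. Since $G$ has no triangle, $N(v)$ is independent; since $G$ has no $4$-cycle, no two distinct neighbours of $v$ share a neighbour other than $v$. Hence the sets $N(u)\setminus\{v\}$, for $u\in N(v)$, are pairwise disjoint and disjoint from $N(v)\cup\{v\}$, which forces
\begin{equation*}
n \ge 1 + \delta(G) + \delta(G)\bigl(\delta(G)-1\bigr) = 1 + \delta(G)^2.
\end{equation*}
Plugging in $\delta(G)\ge\frac{n+2}{3}$ gives $9(n-1)\ge (n+2)^2$, i.e.\ $n^2-5n+13\le 0$, which is impossible as the discriminant is negative. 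Thus $\mathop{g}(G)\le 4$, and since every cycle has length at least $3$, the girth is $3$ or $4$.

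The core is weak pancyclicity, and I would argue by contradiction on a \emph{minimal gap}. Assuming $G$ is not weakly pancyclic, let $\ell$ be the smallest integer with $\mathop{g}(G)<\ell<\mathop{c}(G)$ for which $G$ has no $C_\ell$, so that $C_{\mathop{g}(G)},\dots,C_{\ell-1}$ all occur and some cycle longer than $\ell$ occurs. Among all cycles longer than $\ell$ I would take a shortest one, say $D$ of length $m\ge\ell+1$. The aim is then to manufacture a $C_\ell$, either as a sub-configuration obtained from a chord of $D$ — a chord joining vertices at cyclic distance $d$ on $D$ yields cycles of lengths $d+1$ and $m-d+1$, one of which I want to be $\ell$ — or by rerouting $D$ through a vertex off $D$ using the neighbourhoods guaranteed by $\delta(G)\ge\frac{n+2}{3}$.

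Before this I would reduce to the case where the relevant cycles lie in a single $2$-connected block; the degree hypothesis rules out thin blocks, and in such a block Lemma~\ref{Dirac} supplies a cycle of length at least $2\delta(G)\ge\frac{2(n+2)}{3}$, which pins $\mathop{c}(G)$ down from below and provides a long cycle to manipulate. (Should some subgraph on $N$ vertices happen to attain minimum degree at least $N/2$, Lemma~\ref{pancyclic lemma 1} finishes at once, its bipartite exception being excluded by hypothesis.)

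The main obstacle is precisely the chord bookkeeping for $D$: when $m$ is only moderately larger than $\ell$, a single chord need not split off an arc of the right length, and the ``crossing'' configurations — where the neighbours on $D$ of two vertices interleave around the cycle without ever producing a $C_\ell$ — must be eliminated by a careful count showing that $\delta(G)\ge\frac{n+2}{3}$ forces two chords, or an off-cycle detour, whose combined effect alters the cycle length by exactly one. It is here that non-bipartiteness is indispensable: a bipartite graph realises only even cycle lengths and so can never bridge a gap between two lengths of opposite parity, whereas the presence of an odd cycle is exactly what permits the length-changing local move to be carried out in either parity. I expect this crossing-chord case analysis, rather than the girth bound or the block reduction, to be the genuinely hard and lengthy part of the argument.
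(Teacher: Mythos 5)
This lemma is not proved in the paper at all: it is quoted verbatim from Brandt, Faudree and Goddard \cite{MR1611825} and used as a black box, so there is no in-paper argument to compare yours against. Judged on its own terms, your proposal proves only the easy half. The Moore-type count for the girth is correct and complete: if $\mathop{g}(G)\geq 5$ then the sets $N(u)\smallsetminus\{v\}$ for $u\in N(v)$ are pairwise disjoint and disjoint from $N(v)\cup\{v\}$, giving $n\geq 1+\delta(G)^{2}$, and $\delta(G)\geq\frac{n+2}{3}$ then forces $n^{2}-5n+13\leq 0$, which has no real solutions. That part stands.

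The weak pancyclicity, however, is the entire content of the lemma, and for it you supply a plan rather than a proof. You set up a minimal missing length $\ell$ and a shortest cycle $D$ of length $m>\ell$, observe that a chord at cyclic distance $d$ yields cycles of lengths $d+1$ and $m-d+1$, and then state that the ``crossing'' configurations must be eliminated by a careful count --- while explicitly conceding that this count, the genuinely hard part, is not carried out. Nothing in the sketch explains how $\delta(G)\geq\frac{n+2}{3}$ actually produces the length-$\ell$ cycle, nor how non-bipartiteness enters beyond the (correct but non-operational) remark that bipartite graphs cannot change parity; note that the threshold $\frac{n+2}{3}$ is sharp in \cite{MR1611825}, so any completion must use the constant exactly, and the block reduction via Lemma~\ref{Dirac} only bounds $\mathop{c}(G)$ from below without closing the gap. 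As it stands this is an outline of a strategy resembling Bondy-style arguments, not a proof; the honest course for the paper (which it takes) is to cite the result, and the honest assessment of your attempt is that its central step is missing.
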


\begin{lemma}[Brandt et al. \cite{MR1611825}]\label{Brandt}
Let $G$ be a $2-$connected non-bipartite graph. If $\delta(G)\geq \frac{|V(G)|}{4}+250$, then $G$ is weakly pancyclic unless it has odd girth $7$, in which case $G$ contains a copy of the cycle of each length $l$, where $l\neq5$ and $4\leq l\leq\mathop{c}(G)$.
\end{lemma}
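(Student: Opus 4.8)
The plan is to prove the statement in three stages: reduce to the regime of large $n := |V(G)|$, pin down the girth, odd girth and circumference of $G$, and then show that the cycle lengths of $G$ occupy a full interval except, in one case, for the value $5$. Write $\delta := \delta(G)$, so the hypothesis reads $\delta \geq \frac{n}{4}+250$; since $\delta \leq n-1$ this already forces $n \geq 335$, so there is no genuinely ``small $n$'' to worry about. The reduction itself uses the stronger-hypothesis result Lemma~\ref{Brandt 2}: a one-line computation gives $\frac{n}{4}+250 \geq \frac{n+2}{3}$ exactly when $n \leq 2992$, so for $335 \leq n \leq 2992$ Lemma~\ref{Brandt 2} applies and $G$ is weakly pancyclic with girth $3$ or $4$. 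Being weakly pancyclic, such a $G$ contains every length from its girth to $\mathop{c}(G)$; if its girth is $4$ it therefore contains $C_{5}$ and its odd girth is $5$, so the odd-girth-$7$ exception simply cannot occur. It remains to handle $n \geq 2993$.

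For $n \geq 2993$ I would first apply Lemma~\ref{fact checked} with $\epsilon = \frac14$ (legitimate since $\delta \geq \frac{n}{4}$ and $n \geq \frac{10}{\epsilon^{2}} = 160$) to get $\mathop{g}(G) \leq 4$, and Lemma~\ref{Dirac} (using $2$-connectivity) to get $\mathop{c}(G) \geq 2\delta \geq \frac{n}{2}+500$; thus a longest cycle misses fewer than half the vertices. The key qualitative input is a bound on the odd girth $g_{o}$ of $G$. Here I would invoke the Andr\'{a}sfai--Erd\"{o}s--S\'{o}s chain of results, in the form: a non-bipartite graph with no odd cycle shorter than $2k+1$ has $\delta \leq \frac{2n}{2k+1}$. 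Taking $k=4$, a non-bipartite graph with $g_{o} \geq 9$ (no $C_{3}, C_{5}, C_{7}$) would satisfy $\delta \leq \frac{2n}{9}$, contradicting $\delta \geq \frac{n}{4}+250 > \frac{2n}{9}$. Hence $g_{o} \leq 7$. This numerology is exactly why the threshold is $\frac{n}{4}$ and why $7$ is the exceptional value: the bound $\frac{n}{4}$ lies strictly between $\frac{2n}{9}$ (the $g_{o}\geq 9$ barrier) and $\frac{2n}{7}$ (the $g_{o} \geq 7$ barrier), so $g_{o}=7$ can survive while $g_{o}\geq 9$ cannot. Combining with $\mathop{g}(G)\leq 4$ leaves exactly three cases: $\mathop{g}(G)=3$ (so $g_{o}=3$); $\mathop{g}(G)=4$ with $g_{o}=5$; and $\mathop{g}(G)=4$ with $g_{o}=7$.

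The core of the argument is then to show the cycle spectrum has no gaps. Concretely, I would prove that for every $k$ with $\mathop{g}(G) \leq k \leq \mathop{c}(G)$ --- and, in the case $g_{o}=7$, every such $k \neq 5$ --- the graph contains a $C_{k}$. The natural mechanism is a length-descent on a longest cycle $C = v_{1}\cdots v_{c}$: because each $v_{i}$ has degree $\geq \frac{n}{4}+250$ and $C$ already covers more than half of $V(G)$, the vertices of $C$ carry a controlled but substantial collection of chords and external attachments, and I would show that a cycle of length $k$ can be rerouted into one of length $k-1$ by following a chord of the appropriate ``gap'' or a short detour through a vertex off $C$. Running this descent from $\mathop{c}(G)$ downwards reaches the girth in the cases $g_{o}\in\{3,5\}$; in the case $g_{o}=7$ every length is reachable except $5$, since $C_{5}$ is exactly the short odd cycle forbidden by the odd girth, which yields precisely the stated exceptional conclusion (all $C_{l}$ with $l\neq 5$ and $4\leq l\leq \mathop{c}(G)$).

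I expect the gap-filling of the last paragraph to be the main obstacle. In the comfortable regime $\delta \geq \frac{n}{3}$ underlying Lemma~\ref{Brandt 2}, any two vertices have a common neighbour, so a cycle can be shortened by one almost for free; at $\delta \approx \frac{n}{4}$ this fails, and one must instead control the \emph{global} distribution of chords along a longest cycle together with the attachment pattern of the vertices outside it, showing that the only length that can be blocked while respecting $\delta \geq \frac{n}{4}+250$ and non-bipartiteness is $l=5$, and only when $g_{o}=7$. The additive constant $250$ is purely technical: it is there to absorb the lower-order error terms that arise in these chord-counting and attachment estimates, and it is what makes the descent valid uniformly for all admissible $n$ rather than only asymptotically.
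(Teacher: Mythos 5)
You should first be aware that the paper itself contains no proof of this lemma: it is quoted, with citation, from Brandt, Faudree and Goddard \cite{MR1611825}, where it is the main theorem of a long and technical paper. So the question is whether your proposal amounts to a self-contained proof, and it does not, although its perimeter is sound. The reductions you carry out are all correct: $\delta(G)\leq n-1$ forces $n\geq 335$; the computation $\frac{n}{4}+250\geq\frac{n+2}{3}$ iff $n\leq 2992$ lets Lemma~\ref{Brandt 2} dispose of the small range; Lemma~\ref{fact checked} with $\epsilon=\frac14$ gives $\mathop{g}(G)\leq 4$; Lemma~\ref{Dirac} gives $\mathop{c}(G)\geq 2\delta(G)$; and the bound $\delta\leq\frac{2n}{2k+1}$ for non-bipartite graphs of odd girth at least $2k+1$ (provable in two lines, since every vertex has at most two neighbours on a shortest odd cycle) correctly pins the odd girth to $\{3,5,7\}$. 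Your numerology explaining why $7$ is the exceptional value — $\frac{2n}{7}>\frac{n}{4}>\frac{2n}{9}$, with the balanced blow-up of $C_{7}$ witnessing that the exception is genuine — is accurate and is real insight into why the statement has the shape it does.

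The gap is that the entire content of the lemma lies in your third stage, the claim that the cycle spectrum has no gaps between $\mathop{g}(G)$ and $\mathop{c}(G)$ except possibly $5$, and there you offer only a plan together with an explicit admission that it is the main obstacle. The specific mechanism you propose — descending from a longest cycle by shortening one unit at a time via chords or external detours — fails as stated: shortening by one flips parity, and in the odd-girth-$7$ case the lengths $3$ and $5$ are genuinely absent, so no uniform one-step descent can exist; a correct argument must move in steps of two within each parity class and separately graft the odd spectrum onto the even one (for instance by analysing how a shortest odd cycle interacts with long even cycles). Moreover, at $\delta\approx\frac{n}{4}$ two vertices on a cycle need not share a common neighbour, so the ``almost for free'' shortening available in the $\delta\geq\frac{n+2}{3}$ regime of Lemma~\ref{Brandt 2} has no analogue, and the chord- and attachment-counting you defer to is precisely the bulk of the Brandt--Faudree--Goddard argument, where the additive constant $250$ is consumed. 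In short, your proposal correctly isolates the hard kernel of the theorem but does not prove it; as a proof attempt it has a genuine missing step, which is presumably why the present paper, like other users of this result, cites it rather than reproving it.
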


A graph $G$ is \emph{panconnected} if for each $x,y \in V(G)$, there exists an $x$-$y$ path (i.e., a path from vertex $x$ to vertex $y$) of (each) length $l$, where $l\in[2,|V(G)|-1]\cap\mathbb{Z}$. Let $G$ be a bipartite graph with partition $A\sqcup B$ with $|A|\geq |B|$. If for each $x,y \in V(G)$, there exists an $x$-$y$ path of length $l$ for all ``possible" $l\in [2, 2|B|]\cap \mathbb{Z}$, then such $G$ is called a \emph{bipanconnected} graph. More precisely, in a bipanconnected graph $G$, for each $x,y\in V(G)$, there exists an $x$-$y$ path of each length $l$ where:-
\begin{align*}
l\leq\left\{\begin{array}{ll}
		2|B|-1& \textnormal{ and } l\in2\mathbb{Z}+1 \textnormal{ if }  x\in A, y\in B\\
		2|B|-2& \textnormal{ and }l\in2\mathbb{Z} \textnormal{ if } x, y\in B\\
        2|B|& \textnormal{ and }l\in2\mathbb{Z}  \textnormal{ if } x, y\in A\\  
	\end{array}\right..        
\end{align*}
The bipanconnectedness property is the key tool that is being used in the proof of Theorem~\ref{main theorem 1}. We borrow the above description of bipanconnectedness from \cite{MR3713394}.

\begin{lemma}[Du et al.\cite{MR3713394}]\label{Hui}
Let $G$ be a bipartite graph with partition $A\sqcup B$, where $|A|\geq|B|\geq 2$. If $\delta(G) \geq \frac{|A|}{2} + 1$, then such $G$ is bipanconnected.
\end{lemma}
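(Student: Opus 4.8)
The plan is to establish bipanconnectedness by first recording the structural consequences of the degree hypothesis, then producing the shortest admissible path of each of the three endpoint types, and finally showing that any path that is not yet of maximal length can be lengthened by exactly two. Two facts drive everything. Since $G$ is bipartite, every neighbour of a vertex of $A$ lies in $B$, so $\delta(G)\le |B|$; together with $\delta(G)\ge \frac{|A|}{2}+1$ this forces $|B|\ge \frac{|A|}{2}+1$, i.e. $|A|\le 2|B|-2$. This confirms that the values $2|B|$, $2|B|-1$, $2|B|-2$ appearing in the definition are precisely the largest path lengths compatible with the part sizes, the qualifier ``possible'' excluding, for instance, a path of length $2|B|$ between two vertices of $A$ when $|A|=|B|$. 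The second fact is a common-neighbour bound: if $u,v$ lie in the same part, then, as both parts have at most $|A|$ vertices,
\[
|N(u)\cap N(v)|\ \ge\ \deg(u)+\deg(v)-|A|\ \ge\ 2\Big(\tfrac{|A|}{2}+1\Big)-|A|\ =\ 2 .
\]
Hence any two vertices in the same part have at least two common neighbours.

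From this bound the shortest path of each type is immediate. Two vertices of $A$ (or two of $B$) share a common neighbour, giving a path of length $2$; and for $x\in A$, $y\in B$ one picks $a\in N(y)\setminus\{x\}$ and then $b\in (N(x)\cap N(a))\setminus\{y\}$, both choices being possible because all degrees are at least $2$ and same-part pairs have at least two common neighbours, yielding the path $x\,b\,a\,y$ of length $3$. The core of the argument is then an \emph{insertion step}: given an $x$–$y$ path $P$ whose length $l$ is strictly below the maximal admissible value for its endpoint type, I would produce an $x$–$y$ path of length $l+2$. Because $l$ is below the maximum, $P$ misses at least one vertex $w\in B\setminus V(P)$ and at least one vertex $a'\in A\setminus V(P)$. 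I would insert such a pair into a single edge of $P$: if $uv$ is an edge of $P$ with $u\in A$ and $v\in B$, then replacing it by the segment $u\,w\,a'\,v$ raises the length by two, provided $u\sim w$, $w\sim a'$ and $a'\sim v$. The task thus reduces to locating an edge $uv$ of $P$ and off-path vertices $w,a'$ realising these three adjacencies, which a counting argument over the neighbourhoods of the endpoints of the $l$ edges of $P$ supplies throughout the generic range of $l$.

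I expect the insertion step to be the main obstacle, and specifically its behaviour when $P$ is nearly spanning. When $l$ is close to its maximum the off-path vertices $w$ and $a'$ may have almost all of their neighbours already on $P$, so one cannot freely demand that they be adjacent to each other and to a conveniently placed edge, and the naive count degrades. To treat this regime I would pass to the dense subgraph induced by $V(P)\cup\{w\}$ and run a rotation/exchange argument in the spirit of P\'osa: rotating $P$ about its endpoints creates many candidate edges into which $w$, together with a spare vertex of $A$, can be absorbed, while the common-neighbour bound guarantees that a suitable rotation and absorption exist. A separate, careful analysis is needed for the near-balanced extremal configuration $|A|\approx|B|$, where the degree hypothesis is tight and the set of missed vertices is smallest. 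Combining the base cases with repeated application of the lengthening step—across all three endpoint types and both parities—then yields an $x$–$y$ path of every possible length up to the maximum, which is exactly the assertion that $G$ is bipanconnected.
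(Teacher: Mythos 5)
The paper offers no proof of this lemma: it is imported verbatim from Du, Faudree, Lehel and Yoshimoto \cite{MR3713394}, so there is no internal argument to compare yours against; your proposal has to stand on its own. Its preliminary observations are correct and would belong in any proof: $\delta(G)\le|B|$ forces $|A|\le 2|B|-2$, the bound $|N(u)\cap N(v)|\ge \deg(u)+\deg(v)-|A|\ge 2$ for same-part pairs is right, and the length-$2$ and length-$3$ base paths follow from it.

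The genuine gap is that the lengthening step, which is the entire content of the theorem, is asserted rather than proved, and the two devices you propose for it both have concrete failure modes. First, you need off-path vertices $w\in B\setminus V(P)$ and $a'\in A\setminus V(P)$ that are \emph{adjacent to each other}; the degree hypothesis gives no control over a single prescribed pair, and near the top of the length range each part may contribute only one off-path vertex, so $w$ and $a'$ may simply be non-adjacent and the insertion $u\,w\,a'\,v$ is unavailable without first exchanging an on-path vertex for an off-path one. Even when an adjacent pair exists, locating an edge $uv$ of $P$ with the correct orientation ($u\in A$ adjacent to $w$, $v\in B$ adjacent to $a'$, and $v$ the successor of $u$ rather than its predecessor) needs the counting done explicitly, with the endpoint edges and both parities handled; ``a counting argument supplies this'' is where the proof actually lives. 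Second, the fallback you offer for the near-spanning regime --- P\'osa rotations ``about the endpoints'' --- is incompatible with the problem: rotating a path about an endpoint replaces that endpoint, whereas here both ends $x$ and $y$ are prescribed and must be preserved, so the standard rotation--extension machinery does not apply as stated. You also correctly flag the near-balanced case $|A|=|B|$, where the stated maxima $2|B|$ and $2|B|-1$ are themselves unattainable and the ``possible'' qualifier must be interpreted, but you do not resolve it. As it stands the proposal is a plausible outline with the decisive step missing; the honest course in this paper is what the author does, namely cite \cite{MR3713394}.
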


Here, we prove some lemmas which have been used to prove the main results. The results are the following.
\begin{lemma}\label{intersection lemma}
A graph $G$ is $B_{n}-$free if and only if $|N_{G}(x)\cap N_{G}(y)|\leq n-1$ for all $x,y \in V(G)$ with $\{x,y\}\in E(G)$.
\end{lemma}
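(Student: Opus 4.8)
The plan is to prove both directions by unwinding the definition of the book graph $B_{n}$. Recall that $B_{n}=\mathbb{K}_{2}\circledast\mathbb{K}_{n}^{c}$ consists of a base edge, say on vertices $x$ and $y$, together with $n$ further vertices (the \emph{pages}), each of which is joined to both $x$ and $y$. Hence a subgraph of $G$ isomorphic to $B_{n}$ is precisely the data of an edge $\{x,y\}\in E(G)$ together with $n$ distinct vertices $z_{1},\dots,z_{n}$, each adjacent in $G$ to both $x$ and $y$. The observation driving the whole argument is that such page vertices are exactly the elements of $N_{G}(x)\cap N_{G}(y)$: any vertex $z$ adjacent to both $x$ and $y$ lies in this intersection, and conversely. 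Thus the existence of a copy of $B_{n}$ with base $\{x,y\}$ is equivalent to the statement that $\{x,y\}\in E(G)$ and $|N_{G}(x)\cap N_{G}(y)|\geq n$, and the lemma follows by contraposing this equivalence.

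For the forward direction I would argue by contraposition. Suppose some edge $\{x,y\}\in E(G)$ satisfies $|N_{G}(x)\cap N_{G}(y)|\geq n$. Choosing any $n$ vertices $z_{1},\dots,z_{n}$ from this intersection, each $z_{i}$ is adjacent to both $x$ and $y$, and all of them are distinct from $x$ and from $y$ (a vertex is not its own neighbour in a simple graph). Together with the base edge $\{x,y\}$, these $n$ vertices span $n$ triangles sharing the common edge $\{x,y\}$, which is exactly a copy of $B_{n}$; hence $G$ is not $B_{n}$-free.

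For the converse direction I would start from a copy of $B_{n}$ in $G$, let $\{x,y\}$ be its base edge and $z_{1},\dots,z_{n}$ its $n$ page vertices. By the definition of $B_{n}$ each $z_{i}$ is adjacent to both $x$ and $y$, so $\{z_{1},\dots,z_{n}\}\subseteq N_{G}(x)\cap N_{G}(y)$, whence $|N_{G}(x)\cap N_{G}(y)|\geq n$ for the edge $\{x,y\}$. This contradicts the hypothesis that every edge has at most $n-1$ common neighbours, completing the proof.

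There is no genuine obstacle here: the statement is essentially a reformulation of the definition of $B_{n}$ in terms of common neighbourhoods. The only point requiring a moment's care is verifying that the page vertices one extracts (or is given) are genuinely distinct from the two base vertices, which is immediate for simple graphs since adjacency precludes equality.
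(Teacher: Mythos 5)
Your proof is correct and follows essentially the same route as the paper: both directions amount to unwinding the definition of $B_{n}$, identifying the page vertices with the common neighbours of the base edge. The only difference is cosmetic (you phrase the argument via contraposition where the paper uses contradiction, and you explicitly note that page vertices are distinct from the base in a simple graph, a point the paper leaves implicit).
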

\begin{proof}

Let $G$ do not contain any copy of $B_{n}$, and there exist $x,y$ with $\{x,y\}$ being an edge such that $|N_{G}(x)\cap N_{G}(y)|\geq n$. Let $H=N_{G}(x)\cap N_{G}(y)$. Then the subgraph of $G$ with vertex set $V(H)\sqcup \{x,y\}$ contains a copy of $B_{n}$. This contradicts the assumption that $G$ is $B_{n}$-free. Similarly, let $|N_{G}(x)\cap N_{G}(y)|\leq n-1$ for all $x,y$ with $\{x,y\}\in E(G)$. If $G$ contains a copy of $B_{n}$, then there exist two vertices $a,b\in V(G)$ with at least $n$ many common neighbours, i.e., $|N_{G}(a)\cap N_{G}(b)|\geq n$, which is a contradiction. This completes the proof.
\end{proof}

\noindent From the above proof, we observe the following corollary. 

\begin{corollary}\label{intersection lemma 2}
A graph $G$ is $\mathbb{K}_{2,n}-$free if and only if $|N_{G}(x)\cap N_{G}(y)|\leq (n-1)$ for all $x,y \in V(G)$.
\end{corollary}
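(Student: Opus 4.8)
The plan is to mirror the proof of Lemma~\ref{intersection lemma} almost verbatim, since the only structural difference between $B_{n}$ and $\mathbb{K}_{2,n}$ is that $\mathbb{K}_{2,n}$ drops the requirement that the two vertices forming the smaller part $A$ be adjacent. Concretely, a copy of $\mathbb{K}_{2,n}$ in $G$ is precisely a choice of two distinct vertices $x,y$ together with $n$ distinct vertices each adjacent to both $x$ and $y$; no edge between $x$ and $y$ is needed. I would therefore prove both directions by the same common-neighbourhood counting argument used for the $B_{n}$ case, but ranging over all pairs $x,y\in V(G)$ rather than only over edges $\{x,y\}\in E(G)$.

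For the contrapositive of the ``only if'' direction, I would assume there exist distinct $x,y\in V(G)$ with $|N_{G}(x)\cap N_{G}(y)|\geq n$. Setting $A=\{x,y\}$ and choosing any $n$-element subset $B$ of $N_{G}(x)\cap N_{G}(y)$, every vertex of $B$ is adjacent to both vertices of $A$, so the bipartite subgraph induced between $A$ and $B$ contains a complete bipartite graph on these parts, i.e.\ a copy of $\mathbb{K}_{2,n}$, contradicting $\mathbb{K}_{2,n}$-freeness. Conversely, if $|N_{G}(x)\cap N_{G}(y)|\leq n-1$ for all pairs and yet $G$ contained a copy of $\mathbb{K}_{2,n}$, then the two vertices in its part $A$ would be distinct vertices sharing at least $n$ common neighbours, a contradiction. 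This establishes the equivalence.

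There is essentially one point to check rather than a genuine obstacle: I must verify that the common neighbourhood $N_{G}(x)\cap N_{G}(y)$ is automatically disjoint from $\{x,y\}$, so that $A$ and $B$ are disjoint and the embedded $\mathbb{K}_{2,n}$ is honest. This holds because $G$ is simple (no loops), whence $x\notin N_{G}(x)$ and $y\notin N_{G}(y)$, so neither $x$ nor $y$ lies in $N_{G}(x)\cap N_{G}(y)$. I would also note that the statement is to be read for distinct $x,y$, matching the two-vertex part of $\mathbb{K}_{2,n}$, and emphasise that the argument never invokes adjacency of $x$ and $y$ --- this is exactly why relaxing the edge hypothesis of Lemma~\ref{intersection lemma} replaces the quantifier ``for all edges $\{x,y\}$'' by ``for all $x,y\in V(G)$''.
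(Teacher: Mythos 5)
Your proposal is correct and is exactly the argument the paper intends: the paper gives no separate proof, merely observing that the common-neighbourhood argument of Lemma~\ref{intersection lemma} goes through verbatim once the quantifier is extended from edges to all pairs, since no $x$--$y$ edge is needed to form $\mathbb{K}_{2,n}$. Your added checks (disjointness of $\{x,y\}$ from the common neighbourhood in a simple graph, and reading the statement for distinct $x,y$) are sensible but routine.
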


\begin{lemma}\label{maximum degree lemma}
Let $G$ be a graph with $|V(G)|\geq 2n+3$. If such a graph is $B_{n}-$free and its complementary graph $\Bar{G}$ is $C_{m}-$free, where $m\leq 2n+2$, then $\Delta(G)<2n+2$.
\end{lemma}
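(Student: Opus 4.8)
The plan is to argue by contradiction. Suppose $\Delta(G)\geq 2n+2$, and fix a vertex $v$ with $\deg_{G}(v)=\Delta(G)$, writing $N=N_{G}(v)$ so that $d:=|N|\geq 2n+2$. The idea is that the $B_{n}$-freeness of $G$ forces the induced subgraph $G[N]$ to be sparse, whence its complement $\overline{G}[N]$ (an induced subgraph of $\overline{G}$) is dense enough to be pancyclic, and therefore contains the forbidden cycle $C_{m}$.

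First I would bound the degrees inside $N$. For any $u\in N$ the pair $\{v,u\}$ is an edge of $G$, so Lemma~\ref{intersection lemma} gives $|N_{G}(v)\cap N_{G}(u)|\leq n-1$. Since $N_{G}(v)=N$, the common neighbourhood $N_{G}(v)\cap N_{G}(u)$ is exactly the set of neighbours of $u$ lying in $N$; that is, $\deg_{G[N]}(u)\leq n-1$ for every $u\in N$. Passing to the complement on the vertex set $N$, this yields $\delta(\overline{G}[N])\geq (d-1)-(n-1)=d-n$. Because $d\geq 2n+2\geq 2n$, we have $d-n\geq d/2$, so $\overline{G}[N]$ meets the hypothesis $\delta\geq |V|/2$ of Bondy's theorem (Lemma~\ref{pancyclic lemma 1}).

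By Lemma~\ref{pancyclic lemma 1}, $\overline{G}[N]$ is either pancyclic or isomorphic to $\mathbb{K}_{r,r}$ with $r=d/2$. I expect the genuine content of the argument to be excluding the exceptional case, since $\mathbb{K}_{r,r}$ is bipartite and in particular misses odd cycles, so it alone yields no contradiction when $m$ is odd. To rule it out, observe that $\overline{G}[N]\cong \mathbb{K}_{r,r}$ forces $G[N]$ to be its complement, namely a disjoint union of two cliques $\mathbb{K}_{r}\cup \mathbb{K}_{r}$; then every vertex of $G[N]$ has degree $r-1$, and combining with the bound $\deg_{G[N]}(u)\leq n-1$ gives $r\leq n$, i.e.\ $d=2r\leq 2n$, contradicting $d\geq 2n+2$. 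Hence $\overline{G}[N]$ must be pancyclic, so it contains a cycle of every length between $3$ and $d$.

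It remains to locate $C_{m}$. Since $3\leq m\leq 2n+2\leq d$, pancyclicity of $\overline{G}[N]$ produces a copy of $C_{m}$ inside $\overline{G}[N]$, which is an induced subgraph of $\overline{G}$. This contradicts the assumption that $\overline{G}$ is $C_{m}$-free. Therefore no vertex can have degree $2n+2$ or more, and we conclude $\Delta(G)<2n+2$. The only delicate point in the whole plan is the elimination of the $\mathbb{K}_{r,r}$ alternative, and I have arranged the degree threshold $d\geq 2n+2$ (rather than $2n$) precisely so that this exceptional graph is forbidden by the $B_{n}$-free degree cap.
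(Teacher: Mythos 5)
Your proof is correct and follows essentially the same route as the paper: bound the degrees inside the neighbourhood of a maximum-degree vertex via Lemma~\ref{intersection lemma}, pass to the complement, and invoke Bondy's theorem (Lemma~\ref{pancyclic lemma 1}) to produce the forbidden $C_m$. The only cosmetic differences are that the paper works with a subset of $N_G(v)$ of size exactly $2n+2$ and excludes the exceptional $\mathbb{K}_{r,r}$ case by showing the complement of the induced neighbourhood is non-bipartite (each part would need at least $n+2$ vertices), whereas you exclude it by observing that its complement would be two disjoint copies of $\mathbb{K}_{r}$, violating the degree cap $n-1$; both exclusions are valid.
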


 \begin{proof}
Let $G$ be a graph with $|V(G)|\geq2n+3$. If $\Delta(G)\geq 2n+2$. We choose $v\in V(G)$ to be the vertex with the maximum degree. Consider $H=G[X]$, the subgraph induced by $X$ where $X\subseteq N_{G}(v)$ with $|X|=(2n+2)$. Since $G$ is  $B_{n}$-free, using Lemma~\ref{intersection lemma}, we conclude that $|N_{H}(x)|\leq (n-1)$ for all $x\in V(H)$. Therefore, $\Delta(H)\leq (n-1)$. Since $\Delta(H)+\delta(\Bar{H})= |V(H)|-1= 2n+1$, we have $\delta (\Bar{H})\geq n+2$.
    
\noindent{\textsl{Claim} :} The graph $\bar{H}$ is a non-bipartite  graph.
\begin{proof}[\tt{Proof of claim} :]\renewcommand{\qedsymbol}{}
Let $\bar{H}$ be a bipartite graph with the partition $V(\Bar{H})= A\sqcup B$. Here, for each $x\in A$, $N_{\Bar{H}}(x)\subset B$. Therefore, $|B|\geq|N_{\Bar{H}}(x)|\geq\delta(\Bar{H})\geq n+2$. Similarly, $|A|\geq\delta(\Bar{H})\geq n+2$. Consequently, $|V(\Bar{H})|=|A|+|B|\geq 2(n+2)$, which contradicts $|V(H)|=2n+2$. 
\end{proof}

Therefore, $\Bar{H}$ is not a bipartite graph with $\delta(\Bar{H})\geq (n+2)>(n+1)= \frac{V(\Bar{H})}{2}$. Using Lemma~\ref{pancyclic lemma 1}, we have that $\Bar{H}$ is a pancyclic graph. Here $m\leq2n+2=|V(\Bar{H})|$. Thus, the subgraph $\Bar{H}$ of the graph $\Bar{G}$ contains a copy of $C_{m}$. This contradicts the assumption that $\Bar{G}$ is $C_{m}-$free.
 \end{proof}

\noindent From the above proof, we have a similar corollary. 

\begin{corollary}\label{maximum degree lemma 2}
Let $G$ be a graph with $ 2n+3$ many vertices such that $G$ is $\mathbb{K}_{2,n}$-free and $\Bar{G}$ is $C_{m}$-free, where $m\leq 2n+2$. Then $\Delta(G)< 2n+2$, where $\Delta(G)$ denotes the maximum degree of the graph.
\end{corollary}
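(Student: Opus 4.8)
The plan is to mirror the proof of Lemma~\ref{maximum degree lemma} almost verbatim, replacing the structural characterization of $B_{n}$-freeness (Lemma~\ref{intersection lemma}) by that of $\mathbb{K}_{2,n}$-freeness (Corollary~\ref{intersection lemma 2}). I would argue by contradiction: suppose $\Delta(G)\geq 2n+2$, and let $v$ be a vertex of maximum degree. Choose $X\subseteq N_{G}(v)$ with $|X|=2n+2$ and set $H=G[X]$. The first step is to bound $\Delta(H)$. For any $x\in V(H)$, every $H$-neighbour of $x$ lies in $X\subseteq N_{G}(v)$ and is adjacent to $x$, so $N_{H}(x)\subseteq N_{G}(x)\cap N_{G}(v)$. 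Since $G$ is $\mathbb{K}_{2,n}$-free, Corollary~\ref{intersection lemma 2} gives $|N_{G}(x)\cap N_{G}(v)|\leq n-1$, whence $\Delta(H)\leq n-1$. Because $H$ has $2n+2$ vertices, its complement satisfies $\delta(\Bar{H})=|V(H)|-1-\Delta(H)\geq (2n+1)-(n-1)=n+2$.

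The second step is to show that $\Bar{H}$ is non-bipartite, by the same counting argument as in Lemma~\ref{maximum degree lemma}: if $\Bar{H}$ were bipartite with parts $A\sqcup B$, then each part would have size at least $\delta(\Bar{H})\geq n+2$, forcing $|V(\Bar{H})|\geq 2n+4>2n+2$, a contradiction.

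The final step invokes Bondy's theorem (Lemma~\ref{pancyclic lemma 1}). Since $\delta(\Bar{H})\geq n+2>n+1=\frac{|V(\Bar{H})|}{2}$ and $\Bar{H}$ is non-bipartite (so it is not the exceptional graph $\mathbb{K}_{r,r}$), the graph $\Bar{H}$ is pancyclic. As $m\leq 2n+2=|V(\Bar{H})|$, the graph $\Bar{H}$---and hence $\Bar{G}$, of which $\Bar{H}$ is a subgraph---contains a copy of $C_{m}$, contradicting the hypothesis that $\Bar{G}$ is $C_{m}$-free. Thus $\Delta(G)<2n+2$.

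I do not anticipate any genuine obstacle here; the corollary is a direct adaptation of the lemma. The only point meriting a moment's care is that the adjacent-pair characterization used for books must be upgraded to the all-pairs characterization for $\mathbb{K}_{2,n}$, and this is immediate because the pair $(x,v)$ to which it is applied is in fact adjacent, so the bounding step is even safer than it needs to be. As in the book case, the substantive content is carried entirely by Bondy's pancyclicity lemma.
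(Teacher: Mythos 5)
Your proposal is correct and follows exactly the route the paper intends: the paper states this corollary without a separate proof, remarking only that it follows from the proof of Lemma~\ref{maximum degree lemma}, and your argument is precisely that proof with Lemma~\ref{intersection lemma} replaced by Corollary~\ref{intersection lemma 2}. Your observation that the pair $(x,v)$ is adjacent, so the all-pairs characterization is applied only where the adjacent-pairs version would already suffice, is accurate and the remaining steps (the bipartiteness counting and Bondy's pancyclicity lemma) match the paper verbatim.
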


\section{Proof of Theorem~\ref{main theorem 1}}

Let $n$ and $m$ be a positive integer and odd positive integer, respectively, such that $n\geq (2m-3)$ where $m\geq 7$, i.e., $m\in\left[7,\frac{n+3}{2}\right]\cap(2\mathbb{Z}+1)$. We consider the graph $G$ to be mutually disjoint copies of two $\mathbb{K}_{n+1}$. Then $\Bar{G}$ equals the bipartite graph $\mathbb{K}_{n+1,n+1}$. Here $G$ does not contain any $B_{n}$ as it is a disjoint union of two $\mathbb{K}_{n+1}$ and $B_{n}$ is a graph of order $(n+2)$. Furthermore, since $m$ is an odd integer, $\Bar{G}$ does not contain a copy of $C_{m}$. Thus, this graph $G$ serves as an example of a graph that is $B_{n}$-free, and its complementary graph $\Bar{G}$ is $C_{m}$-free.  Hence $R(B_{n},C_{m})>2n+2$ and consequently, $R(B_{n},C_{m})\geq2n+3$.

To prove $R(B_{n}, C_{m}) \leq 2n+\frac{1010}{3}$, we use the method of contradiction. So, let there exist a graph $G$ with $2n+\frac{1010}{3}= 2(n+1)+\frac{1004}{3}$ \footnote{Our primary motive here is to apply the Lemma~\ref{Brandt} on the graph $\Bar{G}$. A sufficient condition ``$V(\Bar{G})\geq2(n+1)+\frac{1004}{3}$" may fulfill our motive.} many vertices such that $G$ contains no copy of $B_{n}$ (i.e., $G$ is $B_{n}$-free) and its complementary graph $\Bar{G}$ contains no copy of $C_{m}$ (i.e., $\Bar{G}$ is $C_{m}$-free). We divide the proof into the following two cases.

\noindent{\textbf{Case~I :}} Let $\Delta(G)<\frac{3(n+1)}{2}$. 

Here since $\Delta(G)+\delta(\Bar{G})=|V(G)|-1$ and $|V(G)|=|V(\Bar{G})|=2(n+1)+\frac{1004}{3}$ holds, we have
\begin{equation*}
\delta(\Bar{G})>\frac{(n+1)}{2}+\frac{1001}{3}=\frac{|V(\Bar{G})|}{4}+250.   
\end{equation*}

\begin{lemma}\label{non-bipartite}
Let $G$ be a graph with $2(n+1)+\frac{1004}{3}$ many vertices such that $G$ is $B_{n}$-free. If for such a graph $\Delta(G)<\frac{3(n+1)}{2}$ holds, then the complementary graph $\Bar{G}$ is non-bipartite.
\end{lemma}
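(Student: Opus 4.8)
The plan is to argue by contradiction, exploiting the fact that the $B_{n}$-freeness of $G$ forces the two sides of any bipartition of $\Bar{G}$ to be small cliques in $G$. Suppose, to the contrary, that $\Bar{G}$ is bipartite with bipartition $V(\Bar{G})=A\sqcup B$. The first step is to translate this into a structural statement about $G$: since $\Bar{G}$ has no edge inside $A$ and no edge inside $B$, passing to the complement shows that the induced subgraphs $G[A]$ and $G[B]$ are complete graphs, namely $\mathbb{K}_{|A|}$ and $\mathbb{K}_{|B|}$.

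Next I would bound the orders of these two cliques using Lemma~\ref{intersection lemma}. Assuming $|A|\geq 2$, pick any edge $\{x,y\}$ of $G[A]$; because $G[A]$ is complete, every remaining vertex of $A$ is a common neighbour of $x$ and $y$ in $G$, so $|N_{G}(x)\cap N_{G}(y)|\geq |A|-2$. Since $G$ is $B_{n}$-free, Lemma~\ref{intersection lemma} gives $|N_{G}(x)\cap N_{G}(y)|\leq n-1$, whence $|A|-2\leq n-1$, i.e. $|A|\leq n+1$; this bound holds trivially when $|A|\leq 1$. Running the identical argument on the clique $G[B]$ yields $|B|\leq n+1$.

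Finally I would count vertices. Since $A$ and $B$ partition $V(\Bar{G})=V(G)$, we get $|V(G)|=|A|+|B|\leq 2(n+1)=2n+2$. This contradicts the hypothesis $|V(G)|=2(n+1)+\frac{1004}{3}>2n+2$, and therefore $\Bar{G}$ cannot be bipartite.

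The argument is short, and the only delicate point is the first step: recognising that $B_{n}$-freeness is exactly what caps the order of each of the cliques $G[A]$ and $G[B]$ at $n+1$, so that two such cliques are too small to cover the $2(n+1)+\frac{1004}{3}$ vertices of $G$. It is worth noting that the minimum-degree estimate $\delta(\Bar{G})>\frac{|V(\Bar{G})|}{4}+250$ obtained in Case~I from the hypothesis $\Delta(G)<\frac{3(n+1)}{2}$ is not actually required for this lemma; the contradiction is a pure vertex-counting obstruction, so the conclusion in fact persists whenever $|V(G)|>2n+2$.
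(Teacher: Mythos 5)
Your proof is correct and follows essentially the same route as the paper's: both arguments suppose $\Bar{G}$ is bipartite, observe that each part of the bipartition induces a clique in $G$, and derive a contradiction from the fact that $B_{n}$-freeness caps the size of such a clique at $n+1$ while the vertex count is $2(n+1)+\frac{1004}{3}>2n+2$. The only cosmetic differences are that the paper works with the larger part alone (showing $|A|>n+2$ so that $G[A]\supseteq\mathbb{K}_{n+2}\supseteq B_{n}$) whereas you bound both parts via Lemma~\ref{intersection lemma} and sum, and your closing observation that the hypothesis $\Delta(G)<\frac{3(n+1)}{2}$ is not actually used is also accurate of the paper's own proof.
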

\begin{proof}
If not, then let $\Bar{G}$ be a bipartite with parts $A$ and $B$, where $|A|\geq|B|$. Since 
\begin{equation*}
|A|+|B|=|V(\Bar{G})|=2(n+1)+\frac{1004}{3}    
\end{equation*}
holds we have $2|A|\geq 2(n+1)+\frac{1004}{3}$, i.e., $|A|\geq (n+1)+\frac{502}{3}>(n+2)$ holds. Since all the vertices in $A$ are isolated in $\Bar{G}$, it follows that the graph $G$ contains a copy of $\mathbb{K}_{|A|}$. Since $|A|>(n+2)$, and $B_{n}$ is a subgraph of $\mathbb{K}_{n+2}$, such $G$ contains a copy of $B_{n}$, which contradicts the assumption that $G$ is $B_{n}$-free. 
\end{proof}

Using Lemma~\ref{non-bipartite}, we have the graph $\Bar{G}$ is non-bipartite, and we further study the graph parameter $\mathop{k}(\Bar{G})$. Here we inquire into the cases $\mathop{k}(\Bar{G})\geq2$; $\mathop{k}(\Bar{G})=1$ and $\mathop{k}(\Bar{G})=0$. Each such case leads us to a contradiction.

If $\mathop{k}(\Bar{G})\geq 2$ (Hence $\Bar{G}$ is $2-$connected.), then using Lemma~\ref{Brandt} we conclude that $\Bar{G}$ is a weakly pancyclic graph or contains a copy of a cycle of each length between $4$ and $\mathop{c}(\Bar{G})$ except $5$.
Let $\Bar{G}$ be weakly pancyclic. Using Lemma~\ref{Dirac}, we have  
\begin{equation*}
m\leq\frac{n+3}{2}<\left(\frac{n+1}{2}+\frac{1001}{3}\right)\leq\delta(\Bar{G})\leq c(\Bar{G}).   
\end{equation*}

\noindent Since $\delta(\Bar{G})\geq \frac{|V(\Bar{G})|}{4}+250> \frac{|V(\Bar{G})|}{4}$, applying $\epsilon=\frac{1}{4}$ in Lemma~\ref{fact check}\footnote{Note that, if we apply Lemma~\ref{fact checked}, then an extra condition of $n\geq 160$ is needed in Theorem~\ref{main theorem 1}.}, we have $\mathop{g}(\Bar{G})\leq 4$. Thus, the integer $m$ satisfies $\mathop{g}(\Bar{G})\leq m \leq \mathop{c}(\Bar{G})$. This follows that $\Bar{G}$ contains a copy of $C_{m}$. In the other case, if $\Bar{G}$ contains a copy of a cycle of each length between $4$ and $\mathop{c}(\Bar{G})$ except $5$, then in particular $\Bar{G}$ contains a copy of $C_{m}$, since $m\geq 7$. From both cases, we conclude that $\Bar{G}$ contains a copy of $C_{m}$, which is a contradiction. 

If $\mathop{k}(\Bar{G})= 1$, then there exists a vertex $v$ such that $\Bar{G}-\{v\}$ is disconnected. Here $\Bar{G}-\{v\}$ denotes the graph with deleted vertex $v$ (hence all edges containing $v$) from the graph $\Bar{G}$. Suppose $\Bar{G}-\{v\}$ contains $r$ many distinct components $G_{1},\ldots,G_{r}$ (say) for some integer $r\geq2$. Note that, for each $i\in[r]$,
\begin{equation*}
 |V(G_{i})|\geq\delta(G_{i})\geq \delta(\Bar{G}-\{v\}) \geq \delta(\Bar{G})-1 \geq \frac{n+1}{2}+\frac{998}{3}. 
\end{equation*}
This implies that 
\begin{equation*}
2(n+1)+\frac{1001}{3}=|V(\Bar{G}-\{v\})|=|V(G_{1})|+\ldots+|V(G_{r})|\geq r\left(\frac{n+1}{2}+\frac{998}{3}\right),
\end{equation*} 
which implies $r\leq3$. If $r=2$, then without loss of generality we assume that $|V(G_{1})|\geq |V(G_{2})|$. Here we see that 
\begin{equation*}
 2(n+1)+\frac{1001}{3}=|V(\Bar{G}-\{v\})|=|V(G_{1})|+|V(G_{2})|\geq 2|V(G_{2})|   
\end{equation*}
This means $|V(G_{2})|\leq (n+1)+\frac{1001}{6}$. Also here, we have  
\begin{equation*}
\delta(G_{2})\geq\delta(\Bar{G})-1\geq\frac{n+1}{2}+\frac{998}{3}> \frac{|V(G_{2})|}{2}.     
\end{equation*}
Hence, using the lemma~\ref{pancyclic lemma 1}, $G_{2}$ is either a pancyclic graph or it is isomorphic to the complete bipartite graph $\mathbb{K}_{s,s}$ for some integer $s$ satisfying $s\geq \delta(G_{2})\geq \frac{n+1}{2}+\frac{998}{3}$. If $G_{2}$ is isomorphic to the complete bipartite graph $\mathbb{K}_{s,s}$, then
\begin{equation*}
 (n+1)+\frac{1001}{6}\geq|V(G_{2})|=2s\geq 2\left(\frac{n+1}{2}+\frac{998}{3}\right)=(n+1)+\frac{1996}{3}   
\end{equation*}
holds, which is a contradiction. Hence, $G_{2}$ is a pancyclic graph. Since $m\leq\frac{n+3}{2}\leq\frac{n+1}{2}+\frac{998}{3}\leq|V(G_{2})|$ holds, we have that $G_{2}$ contains a copy of $C_{m}$. In particular, $\Bar{G}$ contains a copy of $C_{m}$, which is a contradiction. So let $r=3$ and without loss of generality, we assume that $|V(G_{1})|\geq |V(G_{2})|\geq |V(G_{3})|$. Thus
\begin{equation*}
 |V(\Bar{G}-\{v\})|= 2(n+1)+\frac{1001}{3}=|V(G_{1})|+|V(G_{2})|+|V(G_{3})| \geq 3|V(G_{3})|,   
\end{equation*}
consequently, $|V(G_{3})|\leq \frac{2(n+1)}{3}+\frac{1001}{9}$. Hence, using similar arguments, we reach a contradiction that the subgraph $G_{3}$ of the graph $\Bar{G}$ contains a copy of $C_{m}$.

The remaining case is $\mathop{k}(\Bar{G})=0$. Here $\Bar{G}$ is disconnected. On assuming $\Bar{G}$ contains $r$ many distinct components $G_{1},\ldots,G_{r}$ and arguing similarly, we reach the same contradiction that $\Bar{G}$ contains a copy of $C_{m}$.

\noindent{\textbf{Case II :}}  Let $\Delta(G)\geq \frac{3(n+1)}{2}$. i.e., there exists a vertex $v$ such that $|N_{G}(v)|=\Delta(G)=\frac{3(n+1)}{2}+p$ for some integer $p\geq 0$. Here $N_{G}(v)=\{x\in V(G): \{x,v\}\in E(G)\}$, denotes the \emph{neighbours} of the vertex $v$ in the graph $G$.  

Here $|V(G)|=2(n+1)+\frac{1004}{3}>2n+3$. We assumed that $G$ is $B_{n}-$free and $\Bar{G}$ is $C_{m}-$free, where $m\leq 2n+2$. Thus, using Lemma~\ref{maximum degree lemma}, we have $\Delta(G)<2n+2$; this implies $p\leq \frac{n+1}{2}-1$. Now we construct the graph $H= G[M]$ where $M\subseteq N_{G}(v)$ with $|M|=\frac{3(n+1)}{2}$. Here such graph $H$ contains $\frac{3(n+1)}{2}$ many vertices and the edge set $E(H)=\{\{x,y\}\in E(G):x,y\in V(H)\}$.  

We further study the graph $\Bar{H}$. Here we inquire into the cases where the graph $\Bar{H}$ is a bipartite graph or not a bipartite graph. Each such case leads us to a contradiction. Using Lemma~\ref{intersection lemma}, we have $|N_{H}(u)|\leq (n-1)$ for all $u\in V(H)$. Hence $\Delta(H)\leq (n-1)$. Consequently,
\begin{equation*}
\delta(\Bar{H}) \geq (|V(\Bar{H})| - 1)- \Delta(\Bar{H}) \geq\left(\frac{3(n+1)}{2}-1\right)-(n-1)=\frac{n+1}{2}+1\geq \frac{|V(\Bar{H})|+2}{3}.   
\end{equation*}

\noindent If $\Bar{H}$ is not a bipartite graph, then using Lemma~\ref{Brandt 2}, we have the graph $\Bar{H}$ is a weakly pancyclic graph with girth $3$ or $4$. Also using the Lemma~\ref{Dirac}, we have $\mathop{c}(\Bar{H})\geq \delta(\Bar{H})\geq (\frac{n+1}{2}+1)\geq m$. Here 
\begin{equation*}
 m\in\left[7,\frac{n+3}{2}\right]\cap(2\mathbb{Z}+1)\subset[g(\Bar{H}),c(\Bar{H})]\cap\mathbb{Z}. 
\end{equation*}
Hence the subgraph $\Bar{H}$ of the graph $\Bar{G}$ contains a copy of $C_{m}$, which is a contradiction.

However, if $\Bar{H}$ is a bipartite graph with the partition $V(\Bar{H})=A\sqcup B$, where $|A|\geq |B|$. Note that $\frac{3(n+1)}{2}= |A|+|B|\geq 2|B|$, which implies $|A|\geq\frac{3(n+1)}{4}\geq |B|$. Consequently, we have 
\begin{equation*}
 |A|\geq\frac{3(n+1)}{4}\geq |B|\geq \delta(\Bar{H}) \geq \frac{n+1}{2}+1    
\end{equation*}
Since each vertex of $A$ is an \emph{isolated vertex} in the complementary graph $\Bar{H}$, and $v$ is adjacent to each vertex of $A$ in the graph $G$, we have $A\sqcup \{v\}$ forms a copy of complete graph $\mathbb{K}_{|A|+1}$ in $G$. Hence $|A|\leq n$; otherwise, $A\sqcup \{v\}$ contains a copy of $\mathbb{K}_{n+2}$. In particular, $G$ contains a copy of $B_{n}$, which is a contradiction. Let $|A|= (n-k)$ for some integer $k\geq 0$. Since $|A|\geq \frac{3(n+1)}{4}$, we have $k\leq \frac{n+1}{4}-1$. Then $|B|=(\frac{n+3}{2}+k)$. We construct 
\begin{equation*}
 X=\left\{x\in V(G)| x\notin V(H), x\neq v \right\}.   
\end{equation*}
Since $|V(G)|=|X|+|\{v\}|+|V(H)|$, we have $|X|= (2(n+1)+\frac{1001}{3})-(\frac{3(n+1)}{2}+1)= \frac{n+1}{2}+\frac{998}{3}$. This means $X$ is non-empty and we claim the following.

\noindent{\textsl{Claim} :} There exists $x\in X$ such that $\{x,a\}\in E(\Bar{G})$ and $\{x,b\}\in E(\Bar{G})$ for some $a\in A$ and $b\in B$.

\begin{proof}[\tt{Proof of claim} :]
Suppose on the contrary no such $x$ exists. This implies that for each $x\in X$, in $\Bar{G}$ it is either non-adjacent to all of $A$ or all of $B$ or all of $A, B$ both. Equivalently, each $x\in X$ is adjacent to all of $A$ or all of $B$ or all of $A$ and $B$ both in $G$. We construct the following sets
\begin{align*}
 &X_{A}=\left\{x\in X| \{x,a\}\in E(G) \textnormal{ for each } a \in A \textnormal{ and } \{x,b\}\notin E(G) \textnormal{ for some } b\in B\right\}\\
&X_{B}= \{x\in X| \{x,b\}\in E(G) \textnormal{ for each } b \in B \textnormal{ and } \{x,a\}\notin E(G) \textnormal{ for some } a\in A\}\\
 &X_{AB}=\left\{x\in X| \{x,a\} \in E(G) \textnormal{ and } \{x,b\}\in E(G) \textnormal{ for each } a \in A,b\in B\right\}
\end{align*}
 
Note that, $X_{A}\sqcup X_{B}\sqcup X_{AB}\subseteq X$. From our assumption, we also have that $X \subseteq X_{A}\sqcup X_{B}\sqcup X_{AB}$. Hence, we conclude,
\begin{equation*}
 X= X_{A}\sqcup X_{B}\sqcup X_{AB}.   
\end{equation*}
If $|X_{A}|\geq (k+1)$, then
\begin{equation*}
 |A\sqcup X_{A}\sqcup \{v\}|=|A|+|X_{A}|+1\geq n-k+k+1+1=n+2.   
\end{equation*}
Here we construct a copy of $B_{n}$ in the graph $G$ using the vertices of $X_{A}\sqcup A\sqcup\{v\}$. We choose $a,a'\in A$. Since $A$ consists of isolated vertices in the complementary graph $\Bar{G}$, we have  $\{a,a'\}\in E(G)$. 
Note that each vertex $\alpha\in A$ is adjacent to all the vertices of $(A\smallsetminus\{\alpha\})\sqcup X_{A}\sqcup \{v\}$ in $G$. By choosing the common neighbours of $a$ and $a'$ in $G$, we get the required copy of $B_{n}$. More precisely if $\beta\in (A\smallsetminus\{a,a'\})\sqcup X_{A}\sqcup\{v\})$, then such $\beta\in N_{G}(a)\cap N_{G}(a')$. This is a contradiction to the assumption.  

If $|X_{A}|< (k+1)$, then 
\begin{equation*}
 |X_{B}\sqcup X_{AB}|= |X|-|X_{A}|\geq\left(\frac{n+1}{2}+\frac{998}{3}-k\right)>\left(\frac{n-3}{2}+2-k\right).   
\end{equation*}
Using a similar argument, we get a copy of $B_{n}$ inside $(B\sqcup X_{B}\sqcup X_{AB}\sqcup \{v\})$ in the graph $G$ and conclude the contradiction to the assumption. Hence, the claim is established.  
\end{proof}

Using the claim above, we choose one such $x$ and the corresponding $a\in A$ and $b\in B$ such that $\{x,a\}\in E(\Bar{G})$ and $\{x,b\}\in E(\Bar{G})$. Since $\delta(\Bar{H}) \geq \frac{(n+1)}{2}+1$ and $|A|\leq n$, we have $\delta(\Bar{H})\geq \frac{|A|}{2}+1$. Using Lemma~\ref{Hui}, we have that the graph $\Bar{H}$ is bipanconnected. We recall that our assumption on $m$ is $m\in[7,\frac{n+3}{2}]\cap(2\mathbb{Z}+1)$. Since $|B|\geq \frac{(n+1)}{2}+1$, $(2|B|-1)\geq (n+2)$, we have 
\begin{equation*}
 m-2\in\left[5,\frac{n-1}{2}\right]\cap(2\mathbb{Z}+1)\subset[2,n+2]\cap(2\mathbb{Z}+1)\subset[2,2|B|-1]\cap(2\mathbb{Z}+1).
\end{equation*}
This implies that there exists an $a$-$b$ path $P$ (say) of length $(m-2)$ in $\Bar{H}$. Now $P$ together with the vertex $x$ form a cycle $C_{m}$ in the graph $\Bar{G}$. This contradicts the assumption.

Finally, we conclude that $2n+3\leq R(B_{n}, C_{m}) \leq 2n+\frac{1010}{3}$.

\section{Proof of Theorem~\ref{main theorem 2}}
We let $n$ and $m$ be a positive integer and an odd positive integer, respectively, such that $n\geq (2m+499)$ (i.e., $\frac{n+1}{2}-250 \geq m$) where $n\geq 3493$ and $m\geq 7$. We aim to show that $R(\mathbb{K}_{2,n}, C_{m}) >2n+2$. Here also, we consider the same graph as the previous, i.e., $G$ be such that $\Bar{G}$ is isomorphic to $\mathbb{K}_{n+1,n+1}$. Due to similar arguments in the proof of Theorem~\ref{main theorem 1}, we have $R(\mathbb{K}_{2,n}, C_{m})> 2n+2$. To prove $R(\mathbb{K}_{2,n}, C_{m}) \leq 2n+3$, we employ a proof by contradiction. In this proof, Case I is the same as before. The only differences are the parameters and numbers. Hence, we skip some similar computations in this proof. In case II, this proof differs from the previous theorem in argument. Unless mentioned, ``similar argument" means similar arguments as in the proof of Theorem~\ref{main theorem 1}. We let there exist a graph $G$ on $2n+3$ many vertices such that $G$ contains no  $\mathbb{K}_{2,n}$ and $\Bar{G}$ does not contain any  $C_{m}$. We divide the proof into the following cases.

\noindent\textbf{Case I :} Let $\Delta(G)<\frac{3(n+1)}{2}-250$. This implies $\delta(\Bar{G})\geq \frac{(n+1)}{2}+251$. Note that here $|V(G)|=|V(\Bar{G})|= 2n+3 = 2(n+1)+1$, thus $\delta(\Bar{G})\geq \frac{(|V(\Bar{G})|)}{4}+250$. Using similar arguments, as in Lemma~\ref{non-bipartite}, here we have the following lemma.

\begin{lemma}
The complementary graph $\Bar{G}$ is non-bipartite.
\end{lemma}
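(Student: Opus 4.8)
The plan is to mirror the argument already used in Lemma~\ref{non-bipartite}, exploiting the fact that the hypotheses of Case~I force $\delta(\Bar{G})$ to be large, and that $G$ being $\mathbb{K}_{2,n}$-free severely limits how large a clique $G$ can contain. First I would suppose, for contradiction, that $\Bar{G}$ is bipartite with parts $A$ and $B$ where $|A|\geq |B|$. Since $|A|+|B|=|V(\Bar{G})|=2n+3$, this immediately gives $2|A|\geq 2n+3$, so $|A|\geq n+2$.

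The key observation is that every vertex in $A$ is non-adjacent to every other vertex of $A$ in $\Bar{G}$, hence the vertices of $A$ are mutually adjacent in $G$; that is, $G$ contains a copy of $\mathbb{K}_{|A|}$ on the vertex set $A$. The hard part — though it is short — is to convert this clique into a forbidden $\mathbb{K}_{2,n}$. Here I would note that in a complete graph $\mathbb{K}_{|A|}$ with $|A|\geq n+2$, any two fixed vertices have at least $n$ common neighbours among the remaining $|A|-2\geq n$ vertices, so by Corollary~\ref{intersection lemma 2} (or directly, since $\mathbb{K}_{n+2}$ clearly contains $\mathbb{K}_{2,n}$), $G$ contains a copy of $\mathbb{K}_{2,n}$. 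This contradicts the assumption that $G$ is $\mathbb{K}_{2,n}$-free.

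The main obstacle, if any, is cosmetic rather than substantive: one must verify that the size bound $|A|\geq n+2$ is genuinely enough to embed $\mathbb{K}_{2,n}$ (it is, since $\mathbb{K}_{n+2}\supseteq \mathbb{K}_{2,n}$), and that the weaker clique-size threshold here ($n+2$, coming from the tighter vertex count $2n+3$ rather than $2(n+1)+\tfrac{1004}{3}$ in Theorem~\ref{main theorem 1}) still suffices. Unlike the $B_{n}$ case, where one needed $|A|>n+2$ to force a book, the $\mathbb{K}_{2,n}$ case is slightly more forgiving because $\mathbb{K}_{2,n}$ sits inside $\mathbb{K}_{n+2}$ without requiring a shared base edge beyond the clique structure, so the threshold $|A|\geq n+2$ is exactly what is needed. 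Since the hypothesis $|A|\geq n+2$ is obtained for free from the vertex count, no appeal to the minimum-degree bound $\delta(\Bar{G})\geq \frac{|V(\Bar{G})|}{4}+250$ is even required for this particular lemma; that bound is instead reserved for the subsequent connectivity analysis of $\Bar{G}$. This completes the contradiction and establishes that $\Bar{G}$ is non-bipartite.
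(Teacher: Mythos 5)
Your proof is correct and follows essentially the same route the paper intends: the paper explicitly dispatches this lemma "using similar arguments as in Lemma~\ref{non-bipartite}", namely that a bipartition of $\Bar{G}$ forces $|A|\geq n+2$, that $A$ induces a clique in $G$, and that $\mathbb{K}_{n+2}\supseteq\mathbb{K}_{2,n}$ contradicts the $\mathbb{K}_{2,n}$-freeness of $G$. (One small quibble with your commentary only: the $B_{n}$ case did not genuinely need $|A|>n+2$ either, since $B_{n}$ is also a subgraph of $\mathbb{K}_{n+2}$; the strict inequality there was just what the arithmetic happened to deliver, and this does not affect your argument.)
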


\noindent Like before, we consider the three cases $\mathop{k}(\Bar{G})\geq 2$, $\mathop{k}(\Bar{G})=1$ and $\mathop{k}(\Bar{G})=0$ and reach a contradiction in each cases.

If $\mathop{k}(\Bar{G})\geq 2$ (i.e., $\Bar{G}$ is $2-$connected) then from Lemma~\ref{Brandt} we conclude that $\Bar{G}$ is either a weakly pancyclic graph or contains all cycles of each length between $4$ and $\mathop{c}(\Bar{G})$ except $5$. Lemma~\ref{Dirac} implies that 
\begin{equation*}
\mathop{c}(\Bar{G})\geq \delta(\Bar{G}) \geq\left(\frac{n+1}{2}+251\right)> \frac{n+1}{2}-250 \geq m.
\end{equation*}
Using a similar argument, we conclude that $\Bar{G}$ contains a $C_{m}$ which is a contradiction.

If $\mathop{k}(\Bar{G})= 1$, then there exists a vertex $v$ such that $\Bar{G}-\{v\}$ is disconnected. Hence $\delta(\Bar{G}-\{v\})\geq \delta(\Bar{G})-1$. Let $\Bar{G}-\{v\}= G_{1}\sqcup G_{2} \sqcup \ldots \sqcup G_{r}$ for some integer $r\geq 2$, where $G_{i}$'s are distinct components. Note that, for each $i\in[r]$,
\begin{equation*}
|G_{i}|\geq \delta(\Bar{G}-\{v\}) \geq \delta(\Bar{G})-1 \geq \frac{n+1}{2}+250.
\end{equation*}
 This implies that $r$ can be at most $3$. In both the cases of $r=2$ and $r=3$, applying Lemma~\ref{pancyclic lemma 1} and proceeding with similar argument, we get the contradiction. 
 
 So the only case remain is $\mathop{k}(\Bar{G})=0$, i.e., $\Bar{G}$ is disconnected. But in this case, also we can reach the contradiction using similar arguments.
 
\noindent\textbf{Case II :}  Let $\Delta(G)\geq \frac{3(n+1)}{2}-250$. i.e., there exists a vertex $v$ such that $|N_{G}(v)|=\frac{3(n+1)}{2}-250+p$ for some integer $p\geq 0$. Using Corollary~\ref{maximum degree lemma 2}, we have $\Delta(G)\leq 2n+1$, which implies $p\leq \frac{n+1}{2}+249$. Here, we assume $p$ to be $0$ for the simplicity of argument and computation. In case of non-zero $p$, the same calculations will follow. 

We construct the graph $H= G[N_{G}(v)]$. Then $|V(H)|=\frac{3(n+1)}{2}-250$. Corollary~\ref{intersection lemma 2} implies that $|N_{H}(u)|\leq (n-1)$ for all $u\in V(H)$. Hence $\Delta(H)\leq (n-1)$ which implies
\begin{align*}
\delta(\Bar{H}) \geq (|V(\Bar{H})| - 1)- \Delta(\Bar{H}) &\geq\left(\frac{3(n+1)}{2}-251\right)-(n-1)\\
&=\frac{(n+1)}{2}-249\geq  \frac{\frac{3(n+1)}{2}-250}{4}+250 = \frac{|V(\Bar{H})|}{4}+250
\end{align*}
The inequality holds since we assume $n\geq 3493$\footnote{Here the actual requirement is $n\geq 3491$, but to satisfy the last inequality of \eqref{3493}, we need $n$ to be at least $3493$. As a result, we assume ``$n\geq 3493$".}. We further study the graph $\Bar{H}$. Note that, in the previous proof we used Lemma~\ref{Brandt 2} on $\Bar{H}$ whereas in this proof we shall use Lemma~\ref{Brandt} on $\Bar{H}$. Hence we divide the rest of the proof into three more cases while $\mathop{k}(\Bar{H})\geq 2$; $\mathop{k}(\Bar{H})=1$ and $\mathop{k}(\Bar{H})=0$. However, the cases $\mathop{k}(\Bar{H})=1$ and $\mathop{k}(\Bar{H})=0$ dealt with the similar arguments we used for $\Bar{G}$ in Case~I to reach the contradiction. The argument to deal with the case $\mathop{k}(\Bar{H})\geq 2$, differs from this proof of the previous theorem. Here, we need an extra result to conclude that $\Bar{H}$ is non-bipartite.

We let $\mathop{k}(\Bar{H})\geq 2$; this implies $\Bar{H}$ is $2-$connected. If we can show that $\Bar{H}$ is non-bipartite, then using Lemma~\ref{Brandt}, it would imply that $\Bar{H}$ is either a weakly pancyclic graph or contains cycles of each length $l$, where $l\in [4,\mathop{c}(\Bar{H})]\cap \mathbb{Z}$ and $l\neq 5$. Since $\delta(\Bar{H})> \frac{|V(\Bar{H})|}{4}$, Lemma~\ref{fact check} ensures that $\mathop{g}(\Bar{H})\leq 4$. As $\mathop{c}(\Bar{H})\geq \delta(\Bar{H})\geq \frac{(n+1)}{2}-249 > m$ and $m\geq 7$, in both the cases $\Bar{H}$ contains a $C_{m}$. This contradicts that $\Bar{G}$ is $C_{m}$-free. However, in order to apply Lemma~\ref{Brandt} on $\Bar{H}$, it is first necessary to show that $\Bar{H}$ is non-bipartite which completes the proof. So if possible, let $\Bar{H}$ be bipartite with partition $V(\Bar{H})=A\sqcup B$, where $|A|\geq |B|$. Note that, 
\begin{equation*}
 2|A|\geq |V(\Bar{H})|=\frac{3(n+1)}{2}-250= |A|+|B|\geq 2|B|  
\end{equation*}
which implies for this case  
\begin{equation*}
 |A|\geq \frac{3(n+1)}{4}-125\geq |B|\geq \delta(\Bar{H}) \geq \frac{(n+1)}{2}-249.   
\end{equation*}
Since all the vertices of $A$ are isolated in $\Bar{H}$ and $v$ is adjacent to all of $A$ in $G$, $A\sqcup \{v\}$ forms a $\mathbb{K}_{|A|+1}$ in $G$. Hence $|A|\leq n$. Otherwise, $A\sqcup \{v\}$ contains a copy of $\mathbb{K}_{n+2}$, thus in particular, it contains a copy of $\mathbb{K}_{2,n}$ in $G$, which is a contradiction. So let $|A|= (n-k)$ for some integer $k\geq 0$. $|A|= (n-k)\geq \frac{3(n+1)}{4}-125$ implies that $k\leq \frac{n+1}{4}+124$. We also have, $|B|=|\Bar{H}|-|A|= (\frac{n+1}{2}-249+k)$. Here we construct the set 
\begin{equation*}
X=\left\{x\in V(G)| x\notin V(H); x\neq v\right\}.
\end{equation*}
From the construction, it follows that $|X|= (2n+3)-(\frac{3(n+1)}{2}-250+1)= \frac{n+1}{2}+250$, i.e $X$ is non-empty.

\begin{remark}
 If $|N(v)|=\frac{3(n+1)}{2}-250+p$ for some integer $p\geq0$, then $p\leq\frac{(n+1)}{2}+249$, consequently $X$ is non-empty.   
\end{remark}

\noindent Together with a similar proof to the claim in Theorem~\ref{main theorem 1},  here we claim the following and omit the proof.
 
\noindent{\textsl{Claim} :} There exists $x\in X$ such that $\{x,a\}\in E(\Bar{G})$ and $\{x,b\}\in E(\Bar{G})$ for some $a\in A$ and $b\in B$.

\noindent Using the claim above, we choose $x\in X$ and the corresponding $a\in A$ and $b\in B$. Now we consider the graph ${H_{1}}=G[V(H)\sqcup \{x\}]$. Here $H_{1}$ is a graph of order $\frac{3(n+1)}{2}-249$. Recall that $H=G[N_{G}(v)]$. Therefore, using the Corollary~\ref{intersection lemma 2}, we have $|N_{G}(x)\cap N_{G}(v)|\leq n-1$, i.e.,  $|N_{H}(x)|\leq n-1$. Since $|V(H)|=|N_{H}(x)|+|N_{\Bar{H}}(x)|$, it follows that
\begin{equation*}
|N_{\Bar{H}}(x)|=|V(H)|-|N_{H}(x)|\geq \left(\frac{3(n+1)}{2}-250\right)-(n-1)=\frac{n+1}{2}-248.
\end{equation*}
This implies 
\begin{align*}
\delta(\Bar{H_{1}})&=\min\{|N_{\Bar{H}_{1}}(t)|: t\in V(\Bar{H}_{1})\}\\
&=\min\{|N_{\Bar{H}_{1}}(t)|: t\in V(\Bar{H})\textup{ or } t=x\}\\
&=\min\left\{\min\{|N_{\Bar{H}}(t)|: t\in V(\Bar{H})\},\min\{|N_{\Bar{H}}(t)|:t=x\}\right\}\}\\
&=\min\left\{\delta(\Bar{H}), |N_{\Bar{H}}(x)|\right\}\\
&\geq\min\left\{\frac{n+1}{2}-249, \frac{n+1}{2}-248\right\}=\frac{n+1}{2}-249
\geq\frac{|V(\Bar{H_{1}})|}{4}+250.\label{3493}\tag{$\ast$}
\end{align*}
The last inequality \eqref{3493} holds since $n\geq 3493$ \footnote{In the computation of $\delta(\Bar{H}_{1})$,  we have used the (non-trivial) foundational fact that, $\min A\cup B=\min\{\min A,\min B\}$, where $A$ and $B$ are finite subsets of $\mathbb{R}$.}. Here we make the following claim.

\noindent{\textsl{Claim} :} The complementary graph $\Bar{H_{1}}$  is a non-bipartite graph.

\begin{proof}[\tt{Proof of claim} :]
We prove this result by using the contradiction method. Thus, we assume (if possible) the graph $\Bar{H_{1}}$ is a bipartite graph, along with the partition $V(\Bar{H_{1}})=A_{1}\sqcup B_{1}$, where $|A_{1}|\geq |B_{1}|$. Recall that we assumed $\Bar{H}$ is the bipartite graph with partition $A\sqcup B=V(\Bar{H})$, where $|A|\geq|B|$. We show that all the vertices of $A$ lie within the same part of $V(\Bar{H_{1}})$, i.e either $A\subset A_{1}$ or $A\subset B_{1}$, but not both. If not, then let there exist some $a', a'' \in A$ such that $a'\in A_{1}$ and $a''\in B_{1}$. Since each vertex of $A$ is an isolated vertex in $\Bar{H}$, the neighbours of $a'$ and $a''$ in $\Bar{H_{1}}$ are either $x$ or some vertex in $B$. This implies $N_{\Bar{H_{1}}}(a')\cup N_{\Bar{H_{1}}}(a'')\subseteq (B\sqcup \{x\})$. Hence
\begin{align*}
|N_{\Bar{H_{1}}}(a')\cap N_{\Bar{H_{1}}}(a'')|=&|N_{\Bar{H_{1}}}(a')|+|N_{\Bar{H_{1}}}(a'')|-|N_{\Bar{H_{1}}}(a')\cup N_{\Bar{H_{1}}}(a'')|\\
\geq& \delta(\Bar{H}_{1})+\delta(\Bar{H}_{1})-|B\sqcup\{x\}|\\
\geq& 2\left(\frac{n+1}{2}-249\right)-(|B|+1)\\
=&(n-497)-\left(\frac{n+1}{2}-248+k\right)\\
=&\frac{n+1}{2}-250-k.
\end{align*}
Since $k\leq \frac{n+1}{4}+124$ and $n\geq 3493$, $(\frac{n+1}{2}-250-k)>0$, i.e., there exists some vertex $q$ such that $\{q,a'\}$ and $\{q,a''\}$ both are in $E(\Bar{H_{1}})$. But this is impossible as $a'$ and $a''$ are in two different parts of a bipartite graph according to the assumption. Thus, without loss of generality, we can assume that $A\subseteq A_{1}$ and consequently $a\in A_{1}$. Since $\Bar{H}$ is a bipartite graph, we have for each $b'\in B$, $N_{\Bar{H}}(b')\subseteq A\subseteq A_{1}$ and $|N_{\Bar{H}}(b')|\geq\delta(\Bar{H})\geq(\frac{n+1}{2}-249)$. Since we have assumed $\Bar{H}_{1}$ to be a bipartite graph, we have such $b'\notin A_{1}$. Therefore, $B\subseteq B_{1}$ and consequently $b\in B_{1}$. Thus, the bipartite graph $\Bar{H}_{1}$ with $V(\Bar{H}_{1})=A_{1}\sqcup B_{1}$ contains a vertex $x$ such that it has one neighbour $a\in A\subset A_{1}$ and has one neighbour $b\in B\subset B_{1}$. A contradiction arises. This establishes the claim.
\end{proof}

So far we have established that if $\Bar{H}$ is bipartite, then $\Bar{H_{1}}$ is non-bipartite. We now aim to prove that $\mathop{k}(\Bar{H})\geq 2$ implies $\mathop{k}(\Bar{H_{1}})\geq 2$. If not, then let $\mathop{k}(\Bar{H_{1}})<2$. Note that $\mathop{k}(\Bar{H})\geq 2$ implies that $\Bar{H}$ is connected, and it remains connected upon the deletion of any single vertex. Since $x$ is adjacent to the connected graph $\Bar{H}$ with at least $\frac{n+1}{2}-248$ many vertices in $\Bar{G}$, we conclude $\Bar{H_{1}}$ is a connected graph, which implies $\mathop{k}(\Bar{H_{1}})>0$. If $\mathop{k}(\Bar{H_{1}})=1$, then there exists a vertex $u$ such that $(\Bar{H_{1}}-\{u\})$ is disconnected. Since $\Bar{H}$ is connected, $u\neq x$. If $u\in \Bar{H}$, then it would imply that $(\Bar{H}-\{u\})$ is disconnected, which is not possible as $\mathop{k}(\Bar{H})\geq 2$. Thus, to complete the proof, we again apply Lemma~\ref{Brandt} on $\Bar{H_{1}}$ to show that it is either weakly pancyclic or contains cycles of all lengths between $7$ and $\mathop{c}(\Bar{H_{1}})$. Applying $\epsilon=\frac{1}{4}$ in Lemma~\ref{fact check} implies that $\mathop{g}(\Bar{H_{1}})\leq 4$. Since $\mathop{c}(\Bar{H_{1}})\geq \delta(\Bar{H_{1}})\geq \frac{n+1}{2}-249 > m$ and $m \geq 7$, in both the cases $\Bar{H_{1}}$ contains a $C_{m}$. In particular, $\Bar{G}$ contains a $C_{m}$. Thus, $\Bar{H}$ is non-bipartite. 

Finally, we conclude that $R(\mathbb{K}_{2,n},C_{m})=2n+3$.

\section{Remarks and Conclusion}
It is pertinent to consider the following question: What distinguishes the two proofs? The answer lies in the final claim established in the proof of Theorem~\ref{main theorem 2}. The next question arises: why is this claim not applicable in Theorem~\ref{main theorem 1}? The answer is rooted in the additional edge present in $B_{n}$, which is absent in $\mathbb{K}_{2,n}$. In the case of $B_{n}$, we are unable to apply Lemma~\ref{intersection lemma} to the vertex $x$ to deduce any upper bound on $N_{H}(x)$ as $v$ and $x$ are not adjacent in $G$. In contrast, for $\mathbb{K}_{2,n}$, we have the advantage of this non-adjacency, which allows us to derive information about $\delta(\Bar{H_{1}})$.

\begin{acknowledgement}
The author would like to express gratitude to Niranjan Balachandran and Sanchita Paul for their insightful discussions on earlier versions of this problem. The author also extends thanks to Kaushik Majumder for his substantial assistance in preparing the draft and numerous discussions on this problem. The author states that there are no conflicts of interest.
  
\end{acknowledgement}


\begin{thebibliography}{99}

\bibitem{MR285424}
{\sc J. A. Bondy}, Pancyclic graphs I, \emph{Journal of Combinatorial Theory. Series B} \textbf{11} (1971), 80–84.

\bibitem{MR317991}
{\sc J. A. Bondy, P. Erd\"{o}s}, Ramsey numbers for cycles in graphs, \emph{Journal of Combinatorial Theory. Series B} \textbf{14} (1973), 46–54.


\bibitem{MR1611825}
{\sc Stephan Brandt, Ralph Faudree, and Wayne Goddard}, Weakly pancyclic graphs, \emph{Journal Graph Theory}, \textbf{27}(1998), 141--176.

\bibitem{MR4115773}
{\sc David Conlon}, The Ramsey number of books, \emph{Advances in Combinatorics}, (2019), 3.

\bibitem{MR47308}
{\sc G. A. Dirac}, Some theorems on abstract graphs, \emph{Proceedings of the London Mathematical Society. Third Series}, \textbf{2} (1952), 69--81

\bibitem{MR3713394}
{\sc Hui Du, Ralph J. Faudree, Jen\~{o} Lehel, Kiyoshi Yoshimoto}, A panconnectivity theorem for bipartite graphs, \emph{Discrete Mathematics}, \textbf{341} (2018), 151-154.

\bibitem{MR479691}
{\sc P. Erd\"{o}s, R. J. Faudree, C. C. Rousseau and R. H. Schelp}, The size Ramsey number, \emph{Periodica Mathematica Hungarica. Journal of the J\'{a}nos Bolyai Mathematical Society} \textbf{9} (1978), 145–161.


\bibitem{MR1110243}
{\sc R. J. Faudree, C. C. Rousseau, and J. Sheehan}, Cycle-book Ramsey numbers, \emph{Ars Combinatoria}, \textbf{31} (1991), 239--248.

\bibitem{MR2768884}
{\sc J. Fox and B. Sudakov}, Dependent random choice, \emph{Random Structures Algorithms} \textbf{38} (2011), 68–99.

\bibitem{MR1939069}
{\sc R. Gould, P. Haxell, and A. Scott}, A note on cycle lengths in graphs, \emph{Graphs and Combinatorics}, \textbf{18} (2002), pp. 491--49.

\bibitem{MR4875667}
{\sc Fu-Tao Hu, Qizhong Lin, Tomasz \L uczak, Bo Ning, Xing Peng}, Ramsey numbers of books versus long cycles, \emph{SIAM Journal on Discrete Mathematics}, \textbf{39} (2025), pp. 550--561.

\bibitem{MR4234150}
{\sc Qizhong  Lin and Xing Peng}, Large book-cycle Ramsey number, \emph{SIAM Journal on Discrete Mathematics}, \textbf{35}(2021), 532-545.

\bibitem{MR4142765}
{\sc Alexey Pokrovskiy  and Benny Sudakov}, Ramsey goodness of cycle, \emph{SIAM Journal on Discrete Mathematics}, \textbf{34}(2020), 1884--1908.

\bibitem{MR486186}
{\sc C. C. Rousseau and J. Sheehan}, On Ramsey numbers for books, \emph{Journal of Graph Theory}, \textbf{2} (1978), 77--87.

\bibitem{MR2587033}
{\sc Lingsheng Shi}, Ramsey numbers of long cycles versus books or wheels, \emph{European Journal of Combinatorics}, \textbf{31} (2010), 828--838.


\bibitem{MR679211}
{\sc Andrew Thomason}, On finite Ramsey numbers, \emph{European Journal of Combinatorics} \textbf{3} (1982), 263–273.


\end{thebibliography}
\end{document}